\DeclareRobustCommand*{\mfaktor}[3][]
{
   { \mathpalette{\mfaktor@impl@}{{#1}{#2}{#3}} }
}
\newcommand*{\mfaktor@impl@}[2]{\mfaktor@impl#1#2}
\newcommand*{\mfaktor@impl}[4]{
   \settoheight{\faktor@zaehlerhoehe}{\ensuremath{#1#2{#3}}}%
   \settoheight{\faktor@nennerhoehe}{\ensuremath{#1#2{#4}}}%
      \raisebox{-0.5\faktor@zaehlerhoehe}{\ensuremath{#1#2{#3}}}%
      \mkern-4mu\diagdown\mkern-5mu%
      \raisebox{0.5\faktor@nennerhoehe}{\ensuremath{#1#2{#4}}}%
}
\theoremstyle{definition}
\newtheorem{Definition}{Definition}[section]
\newtheorem*{Construction}{Construction}
\newtheorem{Example}[Definition]{Example}
\theoremstyle{plain}
\newtheorem{Theorem}[Definition]{Theorem}
\newtheorem{Lemma}[Definition]{Lemma}
\newtheorem{Proposition}[Definition]{Proposition}
\newtheorem{Corollary}[Definition]{Corollary}
\newtheorem{Conjecture}[Definition]{Conjecture}
\newtheorem*{Claim}{Claim}
\DeclareMathOperator{\Hom}{Hom}
\DeclareMathOperator{\Ext}{Ext}
\DeclareMathOperator{\ext}{ext}
\DeclareMathOperator{\Der}{Der}
\DeclareMathOperator{\Aut}{Aut}
\DeclareMathOperator{\Coker}{Coker}
\DeclareMathOperator{\Image}{Im}
\DeclareMathOperator{\codim}{codim}
\DeclareMathOperator{\modulecat}{mod}
\DeclareMathOperator{\rank}{rank}
\DeclareMathOperator{\rankv}{\underline{rank}}
\DeclareMathOperator{\dimv}{\underline{dim}}
\DeclareMathOperator{\comp}{Comp}
\DeclareMathOperator{\sgn}{sgn}
\DeclareMathOperator{\diag}{diag}
\DeclareMathOperator{\sub}{sub}
\DeclareMathOperator{\fac}{fac}
\DeclareMathOperator{\codeg}{codeg}
\title{The Aizenbud-Lapid binary operation for symmetrizable Cartan types}
\author{Markus Kleinau}
\begin{document}
\maketitle 

\begin{abstract}
    Aizenbud and Lapid recently introduced a binary operation on the crystal graph $B(-\infty)$ associated to a symmetric Cartan matrix. We extend their construction to symmetrizable Cartan matrices.
\end{abstract}

\setcounter{tocdepth}{1}
\tableofcontents
    
\section{Introduction}


    Let $C$ be a symmetric generalised Cartan matrix, $\Pi$ the preprojective algebra associated to $C$ and $\mathfrak{n}$ the positive part of the symmetric Kac-Moody algebra associated to $C$. The geometry of the representation varieties of $\Pi$ is closely related to the quantum group $U_q(\mathfrak{n})$: In \cite{LusztigSemicanonical}, Lusztig recovered the enveloping algebra $U(\mathfrak{n})$ as an algebra of constructible functions on the representation varieties of $\Pi$. Kashiwara and Saito \cite{KS} showed that the set $\comp$ of irreducible components of those varieties forms the crystal graph $B(-\infty)$ associated to $U_q(\mathfrak{n})$.
    
    Recently Aizenbud, Lapid and Minguez \cite{lapid1}, \cite{lapid2} introduced a binary operation on $\comp$ which is conjectured to recover a similar, but only partially defined, binary operation on $B(-\infty)$. The latter operation was proposed by Leclerc in \cite{imaginary}. Let $\mathbf{B}^*$ be the dual of the canonical basis of $U_q(\mathfrak{n})$. An element $\mathbf{b}\in \mathbf{B}^*$ is called real, if $\mathbf{b}^2 \in q^n\mathbf{B}^*$. Leclerc conjectured the following statement on the multiplicative structure of $\mathbf{B}^*$, which is now a theorem due to \cite{SymmetricProof}. 
    
    \begin{Theorem}[Leclerc's conjecture \cite{imaginary}]\label{LeclercConj}
        Let $\mathbf{b}_1,\mathbf{b}_2\in \mathbf{B}^*$, at least one of which is real. Then there exists a unique $\mathbf{b}'\in \mathbf{B}^*$ such that the expansion of $\mathbf{b}_1\mathbf{b}_2$ is of the form
        \[\mathbf{b}_1\mathbf{b}_2 = q^m\mathbf{b}'+\sum_{\mathbf{c}\neq\mathbf{b}'\in\mathbf{B}^*}\gamma^\mathbf{c}_{\mathbf{b}_1\mathbf{b}_2}(q)\mathbf{c}\]
        where $m\in \mathbb{Z}$ and $\gamma^\mathbf{c}_{\mathbf{b}_1\mathbf{b}_2}\in \mathbb{Z}[q,q^{-1}]$ with $\codeg(\gamma^\mathbf{c}_{\mathbf{b}_1\mathbf{b}_2})>m$.
    \end{Theorem}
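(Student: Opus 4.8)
The plan is to deduce the statement from the categorification of $U_q(\mathfrak n)$ by quiver Hecke (Khovanov--Lauda--Rouquier) algebras and the theory of R-matrices for their modules. Let $R=\bigoplus_{\beta}R(\beta)$ be the family of KLR algebras attached to $C$, and let $\mathcal K=\bigoplus_{\beta}K_0\bigl(R(\beta)\text{-gmod}\bigr)$ be the Grothendieck ring of finite-dimensional graded $R$-modules, with the convolution product $\circ$ as multiplication. Because $C$ is symmetric, $\mathcal K$ is isomorphic (Varagnolo--Vasserot, Rouquier) to the integral form of the quantum coordinate ring $\mathbf A_q(\mathfrak n)$, which in turn is identified with $U_q(\mathfrak n)$ in a way matching dual canonical bases; under these identifications the classes $[L]$ of the self-dual simple graded modules correspond to $\mathbf B^{*}$, the bar involution corresponds to the graded duality $(-)^{\vee}$, and a bare shift $q^{n}$ corresponds to the grading shift by $n$. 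Crucially, $\mathbf b=[L]\in\mathbf B^{*}$ is \emph{real} precisely when $L$ is a \emph{real simple} module, i.e.\ $L\circ L$ is simple up to a grading shift. Writing $\mathbf b_1=[M]$ and $\mathbf b_2=[N]$, so that $\mathbf b_1\mathbf b_2=[M\circ N]$, the hypothesis says one of $M,N$ is real simple, and the problem becomes a statement about the module $M\circ N$.

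\textbf{Existence of $\mathbf b'$.} The first ingredient is the simplicity theorem for heads and socles of convolution products: as soon as one of two simple modules is real, both $M\circ N$ and $N\circ M$ have simple head and simple socle, each of multiplicity one in a composition series, and $\operatorname{hd}(M\circ N)\cong\operatorname{soc}(N\circ M)$ up to a grading shift. Normalising so that $\operatorname{hd}(M\circ N)\cong q^{m}L'$ with $L'$ self-dual simple, set $\mathbf b':=[L']$. Since $\operatorname{hd}(M\circ N)$ is a quotient of $M\circ N$, passing to classes in $\mathcal K$ gives
\[
\mathbf b_1\mathbf b_2=[M\circ N]=q^{m}\mathbf b'+\sum_{\mathbf c\in\mathbf B^{*},\ \mathbf c\neq\mathbf b'}\gamma^{\mathbf c}_{\mathbf b_1\mathbf b_2}(q)\,\mathbf c ,\qquad m\in\mathbb Z ,
\]
with $\gamma^{\mathbf c}_{\mathbf b_1\mathbf b_2}\in\mathbb Z_{\ge0}[q,q^{-1}]$ (the positivity being a by-product of the categorification, and not needed below).

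\textbf{The degree bound (the hard part) and uniqueness.} It remains to establish the sharp estimates: the coefficient of $\mathbf b'$ is exactly the monomial $q^{m}$ --- equivalently $q^{k}L'$ is not a composition factor of $M\circ N$ for $k\neq m$ --- and $\codeg\bigl(\gamma^{\mathbf c}_{\mathbf b_1\mathbf b_2}\bigr)>m$ for every $\mathbf c\neq\mathbf b'$ --- equivalently, in the self-dual normalisation every composition factor of $M\circ N$ other than $L'$ is supported in degrees $\ge m+1$. This is where the argument really lives. The tool is the renormalised R-matrix $\mathbf r_{M,N}\colon M\circ N\to N\circ M$, a nonzero homogeneous intertwiner of a well-defined degree $\Lambda(M,N)$ whose image is $\operatorname{hd}(M\circ N)$; realness of (say) $M$ is exactly what makes $\mathbf r_{M,M}$ invertible up to a scalar. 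Comparing the degree of $\mathbf r_{M,N}$ with the duality relation $(M\circ N)^{\vee}\cong q^{-(\mathrm{wt}M,\mathrm{wt}N)}(N\circ M)$ identifies $m$ with the normalised R-matrix degree $\tfrac12\bigl(\Lambda(M,N)-(\mathrm{wt}M,\mathrm{wt}N)\bigr)$, and the strict positivity of the R-matrix degree between a real simple module and any simple module that is not one of its grading shifts pushes all remaining composition factors into degrees $\ge m+1$. I expect this degree control --- which is genuinely sharp and \emph{not} just a positivity statement --- to be the main obstacle: it demands careful bookkeeping of the grading shifts built into $\circ$, of the radical filtration of $M\circ N$, and of the self-duality of the normalised head. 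Once it is in place, uniqueness of $\mathbf b'$ is purely formal: if $\mathbf b'_1$ and $\mathbf b'_2$ both satisfied the conclusion, with exponents $m_1$ and $m_2$, then reading the coefficient of $\mathbf b'_2$ off the expansion belonging to $\mathbf b'_1$ gives $m_2=\codeg(q^{m_2})>m_1$, and symmetrically $m_1>m_2$, a contradiction; hence $\mathbf b'_1=\mathbf b'_2$.
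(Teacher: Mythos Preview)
The paper does not prove this theorem itself; it only states it and attributes the proof to the reference \cite{SymmetricProof}. There is therefore no ``paper's own proof'' to compare against beyond that citation.

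Your sketch follows precisely the route of the cited literature: categorify $U_q(\mathfrak n)$ by KLR algebras, identify $\mathbf B^*$ with classes of self-dual simple modules, invoke the Kang--Kashiwara--Kim--Oh theorem that a convolution product with a real simple factor has simple head and simple socle (each with multiplicity one), and then control the grading shifts of the remaining composition factors via the degree of the renormalised R-matrix. This is the correct strategy, and you have identified the genuinely delicate point: the strict inequality $\codeg(\gamma^{\mathbf c}_{\mathbf b_1\mathbf b_2})>m$ is not a consequence of positivity alone but of the precise degree computation $\Lambda(M,N)+\Lambda(N,M)=2\mathfrak d(M,N)\ge 0$, together with the fact that equality holds exactly when $M\circ N$ is simple. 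Your uniqueness argument at the end is fine.

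One small caution: you should make explicit that the identification of $\mathbf B^*$ with self-dual simples relies on the Varagnolo--Vasserot theorem, which is only available for \emph{symmetric} $C$. The paper is careful to note that the theorem is proved only in the symmetric case and that the symmetrizable case remains open; your sketch respects this, but the phrase ``Because $C$ is symmetric'' should be flagged as a genuine hypothesis rather than a passing remark.
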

    In this case, we set
    \[ \mathbf{b}_1 \diamond \mathbf{b}_2 := \mathbf{b}'.\]
    This construction can be categorified using quiver Hecke algebras, quantum affine algebras or $GL$-representations, see \cite{simplicity}, \cite{lapid2}. An interpretation in terms of cluster algebras is given in \cite{cluster}.

    The binary operation on $\comp$ is based on generic extensions. Let $C_1,C_2\in \comp$ be irreducible components and let $S\subset C_1\times C_2$ be a sufficiently small open subset. Aizenbud and Lapid observed that the closure of the set of extensions of elements of $S$ is another irreducible component that we will call $C_1 * C_2$. Both $\comp$ and $\mathbf{B}^*$ can be parametrised by $B(-\infty)$. Let $\mu: \comp \rightarrow \mathbf{B}^*$ send a component to the corresponding basis vector. The principal motivation for studying these operations is that they should agree under $\mu$.
    \begin{Conjecture}
        Let $C_1,C_2\in \comp$. Then
        \[ \mu(C_1*C_2) = \mu(C_1) \diamond \mu(C_2) \]
        when the latter is defined.
    \end{Conjecture}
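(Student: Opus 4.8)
\medskip
\noindent\textbf{Reduction to quiver Hecke algebras.}
The plan is to pass to the categorification and phrase everything in terms of self-dual simple modules. The sets $\comp$, $\mathbf B^*$ and the self-dual simple modules of the quiver Hecke (KLR) algebra $R$ attached to $C$ are all parametrised by $B(-\infty)$ in mutually compatible ways; write $L(\mathbf b)$ for the simple module with label $\mathbf b\in\mathbf B^*$ and $\circ$ for the induction product. By Theorem~\ref{LeclercConj} together with the simplicity-of-heads theorem of Kang--Kashiwara--Kim--Oh, whenever $\mathbf b_1$ is real the module $L(\mathbf b_1)\circ L(\mathbf b_2)$ has a simple head and $\mathbf b_1\diamond\mathbf b_2$ equals the class of $\mathrm{hd}\bigl(L(\mathbf b_1)\circ L(\mathbf b_2)\bigr)$ up to a power of $q$. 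So the conjecture is equivalent to the following statement: under $\mu$, the generic extension $C_1*C_2$ corresponds to $\mathrm{hd}\bigl(L(\mu C_1)\circ L(\mu C_2)\bigr)$ whenever $\mu C_1$ or $\mu C_2$ is real. First I would try to prove exactly this.

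\medskip
\noindent\textbf{A geometric bridge between the two operations.}
Both sides have geometric models on Lusztig's quiver varieties. The elements of $\comp$ are the irreducible components of the nilpotent variety $\Lambda_V\subset T^*E_V$, carrying their generic-extension monoid structure; on the other hand $R$ is, by Varagnolo--Vasserot, the $\Ext$-algebra of the direct sum of Lusztig's IC-sheaves on $E_V$, and $\circ$ is geometric convolution along the variety of subrepresentations. The point I would exploit is that the incidence variety defining $C_1*C_2$, whose points are a module together with a submodule and quotient lying generically in $C_2$ and $C_1$, maps via the moment map and Lusztig's Lagrangian correspondence onto the subrepresentation-flag correspondence that computes $\circ$. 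Decomposing $\mathrm{Ind}\bigl(\mathrm{IC}_{C_1}\boxtimes\mathrm{IC}_{C_2}\bigr)$ by the decomposition theorem --- equivalently, expanding $[L(\mu C_1)][L(\mu C_2)]$ in the basis $\mathbf B^*$ --- one then wants to show that the ``extremal'' summand, the one detected by the open stratum of the image of the incidence variety (that is, by the generic extension), is $\mathrm{IC}_{C_1*C_2}$, occurring with graded multiplicity a single power of $q$. This would give $\mu(C_1*C_2)=\bigl[\mathrm{hd}(L(\mu C_1)\circ L(\mu C_2))\bigr]=\mu(C_1)\diamond\mu(C_2)$.

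\medskip
\noindent\textbf{Where realness enters, and the main obstacle.}
Both the extremality and the multiplicity-one are exactly where the hypothesis that $\mathbf b_1=\mu C_1$ be real is indispensable --- realness (which holds, for instance, when the generic module of $C_1$ is rigid, as for cluster monomials) is what rules out hidden cancellation at the top of the induced complex. Heuristically it is what forces the dual canonical picture at the leading term to agree with the semicanonical / Mirkovi\'c--Vilonen-polytope picture, in which $C_1*C_2$ is visibly the generic extension, the relevant polytope of a generic extension being a Minkowski sum, as in the work of Baumann and Kamnitzer. Making this precise, and matching the codimensions in the stratification of the incidence variety with the codegree inequality of Theorem~\ref{LeclercConj}, is the crux. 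I expect the genuine obstacle to be precisely the multiplicity-one statement for the extremal summand of $\mathrm{Ind}\bigl(\mathrm{IC}_{C_1}\boxtimes\mathrm{IC}_{C_2}\bigr)$: that no other IC-sheaf of the same leading dimension interferes. Since this is essentially a geometric reformulation of the conjecture, the real content sits here; the tools I would reach for are a transversality/dimension estimate for the incidence variety combined with the rigidity consequences of realness and the $R$-matrix machinery underlying the Kang--Kashiwara--Kim--Oh results, but I do not see how to make the argument unconditional with currently available techniques --- which is presumably why the statement is still only conjectural.
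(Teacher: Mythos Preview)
The statement you are addressing is a \emph{Conjecture} in the paper, not a theorem; the paper offers no proof and explicitly says only that ``some work towards this conjecture in type $A$ can be found in \cite{lapid2}''. So there is no ``paper's own proof'' to compare against, and your write-up is not a proof either --- as you yourself concede in the last paragraph. What you have produced is a plausible heuristic outline of why one might expect the conjecture to hold, ending precisely where the difficulty begins (the multiplicity-one / extremality statement for the top summand of the induced IC-complex). That is honest, but it is not a proof attempt in any meaningful sense: none of the three steps is carried out, and the ``main obstacle'' you identify is essentially a restatement of the conjecture on the perverse-sheaf side.

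There is also a scope mismatch worth flagging. The conjecture in this paper is posed for \emph{symmetrizable} $C$ via the GLS preprojective algebras, whereas almost every tool you invoke --- Varagnolo--Vasserot's identification of the KLR algebra with an Ext-algebra of Lusztig sheaves, Lusztig's quiver varieties $T^*E_V$, the proof of Leclerc's conjecture in \cite{SymmetricProof}, the Baumann--Kamnitzer MV-polytope picture in the form you use it --- is currently available only in the symmetric case. Even Theorem~\ref{LeclercConj} itself is stated in the paper as open beyond the symmetric case, so that $\diamond$ is not known to be well defined in the generality of the conjecture. Thus your strategy, even if it could be completed, would at best address the symmetric situation and would need substantial new input (or a reduction argument along the lines of the paper's Section~3.4, which the paper only carries out for symmetric $C$) to say anything about the symmetrizable setting that is the paper's focus.
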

    Some work towards this conjecture in type A can be found in \cite{lapid2}.
    
    Now let $C$ be a symmetrizable generalized Cartan matrix. Leclerc's conjecture remains open in this generality. The first goal of this paper is to establish an analogue of the binary operation on $\comp$ in this case. To that end, we need a generalized class of preprojective algebras associated to symmetrizable generalized Cartan matrices. Such a class was introduced in \cite{GLS} by Geiß, Leclerc and Schröer who have already used it to generalize parts of the connections mentioned above, see \cite{GLS4}.

    Although the binary operation $*$ is in general neither associative nor commutative, it does satisfy the following cancellation property.
    \begin{Theorem}
        Let $C_1, C_2,C_2'\in \comp$ be components and assume that $C_1$ is rigid. Then
        \begin{enumerate}[label=(\roman*)]
            \item $C_1 * C_2 = C_1 * C_2'$ implies $C_2 = C_2'$,
            \item $C_2 * C_1 = C_2' * C_1'$ implies $C_2 = C_2'$.
        \end{enumerate}
    \end{Theorem}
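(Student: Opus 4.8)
The plan is to realise $C_1*C_2$ and $C_1*C_2'$ inside a single incidence variety and to exploit rigidity of $C_1$ to see that this variety already determines $C_2$. Since $C_1$ is rigid it equals the closure $\overline{\mathcal{O}_M}$ of the orbit of a rigid $\Pi$-module $M$; put $\nu_1=\dimv M$, let $\nu_2$ be the common dimension vector of $C_2$ and $C_2'$ (they agree since $*$ adds dimension vectors), and set $\nu:=\nu_1+\nu_2$. Writing $\Lambda(\nu)$ for the representation variety in dimension vector $\nu$, consider
\[
\mathcal{W}:=\bigl\{\,(Z,W)\ :\ Z\in\Lambda(\nu),\ W\subseteq Z\ \text{a submodule},\ \dimv W=\nu_2,\ Z/W\cong M\,\bigr\},
\]
with projections $\pi(Z,W)=Z$ and $\rho(Z,W)=W$. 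As $M$ has open orbit in $\Lambda(\nu_1)$, the locus $Z/W\cong M$ is dense inside the locus $Z/W\in C_1$, so nothing is lost for the dominant part of the extension construction by restricting to $\mathcal{W}$. Over a generic $W\in C_2$ the fibre of $\rho$ is the irreducible variety of extensions of $M$ by $W$, so $\mathcal{W}\cap\rho^{-1}(C_2)$ has a unique irreducible component $\mathcal{W}_2$ dominating $C_2$, and by the construction of $*$ we have $\overline{\rho(\mathcal{W}_2)}=C_2$ and $\overline{\pi(\mathcal{W}_2)}=C_1*C_2=:D$; likewise we obtain $\mathcal{W}_2'$ with $\overline{\rho(\mathcal{W}_2')}=C_2'$ and $\overline{\pi(\mathcal{W}_2')}=D$.

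It now suffices to prove $\mathcal{W}_2=\mathcal{W}_2'$: applying $\rho$ then gives $C_2=C_2'$, which is (i). The first ingredient is that $\mathcal{W}\cap\pi^{-1}(D)$ has a \emph{unique} irreducible component dominating $D$. This follows from irreducibility of the generic $\pi$-fibre: for $Z$ in a suitable dense open $D^\circ\subseteq D$, the fibre $\pi^{-1}(Z)\cap\mathcal{W}$ is the set of submodules $W\subseteq Z$ with $Z/W\cong M$, i.e.\ the image of the nonempty open set of surjections in $\Hom(Z,M)$ under $f\mapsto\ker f$; hence it is irreducible, of dimension independent of $Z\in D^\circ$. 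So $\mathcal{W}_2$ and $\mathcal{W}_2'$ lie in the same dominating component $\widetilde{\mathcal{W}}$, and we are reduced to showing $\dim\mathcal{W}_2=\dim\widetilde{\mathcal{W}}$ (and similarly with $\mathcal{W}_2'$), i.e.\ that $\mathcal{W}_2$ fills $\widetilde{\mathcal{W}}$.

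This dimension bookkeeping is the core of the argument and the point at which rigidity of $M$ is indispensable. On the $\rho$-side, $\dim\mathcal{W}_2=\dim C_2+\bigl(\text{dimension of }\{\,Z:0\to W\to Z\to M\to 0\,\}\bigr)$ for generic $W\in C_2$; since $\Ext^1_\Pi(M,M)=0$, that fibre dimension is expressible through the Euler form of $C$ and the dimension vectors alone. On the $\pi$-side, $\dim\widetilde{\mathcal{W}}=\dim D+\dim\{\,W\subseteq Z:Z/W\cong M\,\}$ for generic $Z\in D$, and the latter fibre is controlled by $\Hom(Z,M)$, which for the generic extension with \emph{rigid} quotient $M$ attains exactly the minimal value again predicted by the Euler form. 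Combining these with the equidimensionality of $\Lambda(\nu_2)$ and $\Lambda(\nu)$ makes the two counts agree, whence $\mathcal{W}_2=\widetilde{\mathcal{W}}=\mathcal{W}_2'$; note that if $M$ were not rigid the $\pi$-fibre over generic $Z\in D$ could be strictly larger than the locus seen by $C_2$, matching the failure of cancellation in general. Finally, (ii) is the mirror statement: one runs the same argument with the roles of sub- and quotient-modules exchanged — now $C_1=\overline{\mathcal{O}_M}$ is the submodule, the relevant $\pi$-fibre over generic $Z$ is the set of submodules of $Z$ isomorphic to $M$, i.e.\ the image of the open set of monomorphisms in $\Hom(M,Z)$ — and concludes $C_2=C_2'$ identically. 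The one genuinely technical step I anticipate is pinning down the precise open loci on which the Euler-form dimension identities hold.
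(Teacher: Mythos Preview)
Your strategy is genuinely different from the paper's. The paper does not set up a two–sided incidence variety and balance dimensions; instead it introduces an explicit partial inverse. Concretely, for $C_1$ rigid it defines a ``generic kernel'' operation $\mfaktor{C_1}{C}$ (closure of the set of kernels of surjections from generic $M\in C$ onto generic $M_1\in C_1$) and proves $\mfaktor{C_1}{(C_1*C_2)}=C_2$. The crux is a short claim: a dense open of $C_2$ lies in the image of the kernel map. This is shown by taking $M$ in the dense constructible set $GL(V).Z\subset C_1*C_2$, so that $M$ already carries a distinguished short exact sequence $0\to M_2\to M\to M_1'\to 0$ with $(M_1',M_2)\in S(C_1,C_2)$; rigidity of $C_1$ is used only to say that the generic $M_1$ chosen from $S(C_1,C)$ and the quotient $M_1'$ coming from this sequence are isomorphic, so the sequence can be rewritten with the given $M_1$. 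Cancellation is then immediate.

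Your sketch has a real gap at exactly the point you flag. The assertion that on the $\rho$–side ``since $\Ext^1_\Pi(M,M)=0$, that fibre dimension is expressible through the Euler form and the dimension vectors alone'' is not right: the $\rho$–fibre over $W$ is governed by $\Der_\Pi(M,W)$, hence by $\hom_\Pi(C_1,C_2)$ and $\ext^1_\Pi(C_1,C_2)$, and these are \emph{not} determined by rank vectors alone. What actually makes the count close is that the $\hom_\Pi(C_2,C_1)$ contribution appears on \emph{both} sides and cancels; to see it on the $\pi$–side you need $\dim\Hom_\Pi(Z,M)=\dim\End_\Pi(M)+\hom_\Pi(C_2,C_1)$ for generic $Z\in D$, and this is where $\Ext^1_\Pi(M,M)=0$ is genuinely used, via the long exact sequence applied to a presentation $0\to W\to Z\to M\to 0$ with $W$ generic in $C_2$. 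You have not established that generic $Z\in D$ admits such a presentation, nor carried out either fibre computation; and in the symmetrizable setting your variety $\mathcal W$ needs more care, since an arbitrary sub–$T$–module of a locally free $T$–module need not be locally free, so ``$\dimv W=\nu_2$'' does not cut out the right locus and $\rho$ does not obviously land in $R_\Pi^C(V_2)$. The paper's argument sidesteps all of this by working directly with $\Hom$–spaces over the generic open $S(C_1,C)$ and using rigidity only to identify two generic points of $C_1$.
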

    Here a component is called rigid if it contains a rigid representation. This is conjectured to be equivalent to the corresponding basis element being real.
    This property was already shown in \cite{lapid1} in the symmetric case. The second goal of the paper is to establish it in the symmetrizable case.
    
    The preprojective algebras introduced in \cite{GLS} depend on a choice of symmetrizer of the Cartan matrix $C$. The binary operation should be independent of this choice. The third goal of the paper is to verify this when $C$ is symmetric. We do not know whether it also holds when $C$ is symmetrizable.
    
    The first part of the paper will recall the basic theory of $GLS$-preprojective algebras. Section 2.1 constructs these algebras as in \cite{GLS}, section 2.2 introduces some important classes of modules and section 2.3 describes the representation varieties from \cite{GLS4}. The second part will focus on adapting the binary operation from \cite{lapid1}. Section 3.1. contains its construction. Then section 3.2 introduces rigid components to compute some examples. The proof of the cancellation property is given in section 3.3. The final section 3.4 proves independence of the symmetrizer in the symmetric case.
\subsection*{Acknowledgement}
    This paper is based on the author's master thesis which was written in Bonn under the supervision of Jan Schröer. We thank him for his help and guidance on this topic. We also thank Erez Lapid for pointing out an error in the proof of the cancellation property. 
\section{Preprojective algebras from symmetrizable Cartan matrices}
    We start by recalling the family of preprojective algebras constructed in \cite{GLS}. It is an adaptation of the preprojective algebras of species described by Dlab and Ringel in \cite{DlabRingel}. The main difference is that \cite{GLS} works over an algebraically closed field while \cite{DlabRingel} requires certain field extensions.
    \subsection{The preprojective algebra}
    \begin{Definition}
        Let $Q_0$ be a finite set. A symmetrizable generalized Cartan matrix is an integral matrix $C = (c_{ij})_{i,j\in Q_0}$ such that
        \begin{enumerate}[label=(\roman*)]
            \item $c_{ii} = 2$,
            \item $c_{ij} \leq 0$ for all $i\neq j$,
            \item $c_{ij} \neq 0 \Leftrightarrow c_{ji}\neq 0 $,
            \item there is a diagonal integer matrix $D = \diag(c_i)_{i\in Q_0}$ such that $c_i\geq 1$ and $DC$ is symmetric.
        \end{enumerate}
        Any such matrix $D$ is called a symmetrizer of $C$. A symmetrizer is minimal if the sum of its entries is minimal. For the rest of the paper, all Cartan matrices will implicitly be symmetrizable generalized Cartan matrices.
    \end{Definition}
    \begin{Definition}
        An orientation of a Cartan matrix $C$ is a subset $\Omega \subset Q_0\times Q_0$ such that
        \begin{enumerate}[label=(\roman*)]
            \item $\{(i,j),(j,i)\} \cap \Omega \neq \varnothing\Leftrightarrow c_{ij} < 0$,
            \item the quiver $(Q_0,\Omega)$ is acyclic.
        \end{enumerate}
    \end{Definition}
    The definition of the preprojective algebra requires a choice of orientation. Up to isomorphism, the algebra does not depend on this choice and it will not affect any later considerations. 
    
    We fix a Cartan matrix $C$ with symmetrizer $D$ and orientation $\Omega$ for the rest of the paper. Next, we define the preprojective algebra using a quiver with relations.
    \begin{Definition}
        The opposite orientation $\Omega^*$ is the set
        \[\Omega^* = \{(j,i) \mid (i,j) \in \Omega\}.\]
        We set $\overleftrightarrow{\Omega} = \Omega\cup\Omega^*$ and for all $c_{ij}<0$
        \begin{align*}
            &g_{ij}= \gcd(c_{ij},c_{ji})&f_{ij}=|c_{ij}|/g_{ij}.
        \end{align*}
        The double quiver $\overleftrightarrow{Q}(C) = (Q_0,\overleftrightarrow{Q}_1,s,t)$ is given by
        \begin{align*}
            Q_0 &= Q_0\\
            \overleftrightarrow{Q}_1 &= \{\alpha_{ij}^{(g)} \mid (i,j)\in \overleftrightarrow{\Omega}, 1\leq g\leq g_{ij} \}\cup \{\varepsilon_i \mid i \in Q_0\}\\
            s(\alpha_{ij}^{(g)}) &= j\\
            t(\alpha_{ij}^{(g)}) &= i\\
            s(\varepsilon_i) &= t(\varepsilon_i) = i.
        \end{align*}
        The sign of an element of $\overleftrightarrow{\Omega}$ is given by $\sgn(i,j)=1$ and $\sgn(j,i)=-1$ for $(i,j)\in \Omega$.
    \end{Definition}
    \begin{Definition}[Preprojective algebra]
        The preprojective algebra is defined as
        \[\Pi = \Pi(C,D,\Omega)= K\overleftrightarrow{Q}/I\]
        where $I$ is generated by the following relations:
        \begin{enumerate}[label=(\roman*)]
            \item For each $i\in Q_0$ the nilpotency relation \[\varepsilon_i^{c_i} = 0.\]
            \item For each $(i,j)\in \overleftrightarrow{\Omega}$ and each $1\leq g \leq g_{ij}$ the commutativity relation
            \[\varepsilon_i^{f_{ji}}\alpha_{ij}^{(g)} = \alpha_{ij}^{(g)} \varepsilon_j^{f_{ij}}.\]
            \item For each $i\in Q_0$ the mesh relation \[\sum_{\substack{j\in Q_0\\c_{ij}<0}}\sum_{g=1}^{g_{ji}}\sum_{f=0}^{f_{ji}-1} \sgn(i,j)\varepsilon_i^f\alpha_{ij}^{(g)}\alpha_{ji}^{(g)}\varepsilon_i^{f_{ji}-1-f} = 0.\]
        \end{enumerate}
    \end{Definition}
    If $C$ is symmetric and $D= \diag(1,\dots,1)$ is minimal then this definition recovers the preprojective algebra of a path algebra.
    \begin{Example}[$B_2$]
        Let $C$ be the Cartan matrix 
        \[\begin{pmatrix}
            2 & -2 \\
            -1 & 2
        \end{pmatrix}.\]
        The double quiver $\overleftrightarrow{Q}(C)$ is the quiver
        \[\begin{tikzcd}
        	1 & 2.
        	\arrow["{\alpha_{12}}", curve={height=-6pt}, from=1-2, to=1-1]
        	\arrow["{\alpha_{21}}", curve={height=-6pt}, from=1-1, to=1-2]
            \arrow["{\varepsilon_1}"',loop, in=110, out=70 ,distance=1em, from=1-1, to=1-1]
            \arrow["{\varepsilon_2}"',loop, in=110, out=70 ,distance=1em, from=1-2, to=1-2]
        \end{tikzcd}\]
        Let $D = \diag(1,2)$ be the minimal symmetrizer. Then the preprojective algebra is given by 
        \[\Pi = K\overleftrightarrow{Q}(C)/(\varepsilon_1,\varepsilon_2^2,\alpha_{12}\alpha_{21},\varepsilon_2\alpha_{21}\alpha_{12} + \alpha_{21}\alpha_{12}\varepsilon_2).\]
    \end{Example}
    \subsection{Representations of the preprojective algebra}
    We consider a vertex $i$ of $\overleftrightarrow{Q}(C)$. Let $H_i$ be the subalgebra of $e_i\Pi e_i$ generated by $\varepsilon_i$. Then the nilpotency relation yields $H_i \cong K[X]/(X^{c_i})$. These truncated polynomial rings replace the field extensions used by Dlab and Ringel in \cite{DlabRingel}. The main technical challenge arises from the fact that the $H_i$ are not fields. This requires a restriction to well behaved classes of modules. We consider a $\Pi$-module $M$ and define $M_i := e_iM$. As $\varepsilon_i$ is a loop at vertex $i$, its action restricts to a map $M_{\varepsilon_i}: M_i \rightarrow M_i$. This turns $M_i$ into an $H_i$-module.
    \begin{Definition}
        A $\Pi$-module $M$ is locally free if $M_i$ is a free $H_i$-module for all $i\in Q_0$.
    \end{Definition}
    \begin{Definition}
        For $i\in Q_0$ the generalized simple module $E_i$ is the free rank one $H_i$-module, interpreted as a $\Pi$-module. A $\Pi$-module $M$ is called $E$-filtered if it admits a filtration 
        \[0=M_0\subset M_1\subset \dots\subset M_{n-1}\subset M_n=M\]
        where each quotient $\faktor{M_{k+1}}{M_k}$ is isomorphic to some $E_i$.
    \end{Definition}
    The class of locally free modules is closed under extensions, kernels of epimorphisms and cokernels of monomorphisms by \cite[Lemma 3.8]{GLS}. In particular, every $E$-filtered module is locally free. The class of $E$-filtered modules is only closed under extensions. We need a third class, the crystal modules from \cite{GLS4}, to compensate for the other two missing closure properties.
    \begin{Definition}
        Let $M$ be a locally free $\Pi$-module and $i\in Q_0$. The submodule $\sub_i(M)$ is the largest submodule supported only at $i$. Dually, the factor module $\fac_i(M)$ is the largest factor module supported only at $i$. We define the submodule $K_i(M)$ and the factor module $Q_i(M)$ using the canonical short exact sequences
        \[0\rightarrow K_i(M)\rightarrow M\rightarrow \fac_i(M)\rightarrow 0,\]
        \[0\rightarrow \sub_i(M)\rightarrow M \rightarrow Q_i(M)\rightarrow 0.\]
    \end{Definition}
    \begin{Definition}
        Crystal modules are defined inductively. The zero module is a crystal module. A $\Pi$-module $M$ is a crystal module if it is $E$-filtered and for all $i\in Q_0$ the modules $\sub_i(M)$ and $\fac_i(M)$ are locally free while the modules $Q_i(M)$ and  $K_i(M)$ are crystal modules.
    \end{Definition}
    By definition, we have
    \[\textnormal{crystal}\Rightarrow E\textnormal{-filtered}\Rightarrow \textnormal{locally free}.\]
    If $C$ is symmetric and $D$ is minimal then all modules are locally free and the properties '$E$-filtered' and 'crystal' are equivalent. In that case '$E$-filtered' is usually called 'nilpotent'.
    \begin{Proposition}\label{Dyn:crystalCokernel}
        Let $f: M\rightarrow N$ be a monomorphism between crystal modules. Then $\Coker(f)$ is $E$-filtered.
    \end{Proposition}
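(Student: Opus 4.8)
\emph{Proposed proof.} The plan is to argue by induction on $\dim_K M$. The base case $M=0$ is trivial, since then $\Coker(f)\cong N$ is a crystal module and hence $E$-filtered. For the inductive step assume $M\neq 0$. Since $M$ is $E$-filtered, the bottom term of an $E$-filtration is a nonzero submodule isomorphic to some $E_i$, so I would fix a vertex $i\in Q_0$ with $\sub_i(M)\neq 0$. Because $M$ and $N$ are crystal modules, $\sub_i(M)$ and $\sub_i(N)$ are locally free and, being supported only at $i$, are isomorphic to $E_i^{a}$ with $a\geq 1$ and to $E_i^{a'}$ respectively, while $Q_i(M)$ and $Q_i(N)$ are again crystal modules.

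The next step is the key observation that $f^{-1}\bigl(\sub_i(N)\bigr)=\sub_i(M)$. Indeed, $\sub_i(M)$ is supported only at $i$, so $f$ maps it into $\sub_i(N)$; conversely $f^{-1}(\sub_i(N))$ is a submodule of $M$ supported only at $i$, hence contained in $\sub_i(M)$ by maximality. Consequently $f$ induces an injection $Q_i(M)=M/\sub_i(M)\hookrightarrow N/\sub_i(N)=Q_i(N)$, and a routine diagram chase with the submodules $f(M)$ and $\sub_i(N)$ of $N$ yields a short exact sequence
\[ 0\longrightarrow \sub_i(N)/\sub_i(M)\longrightarrow \Coker(f)\longrightarrow \Coker\bigl(Q_i(M)\hookrightarrow Q_i(N)\bigr)\longrightarrow 0 . \]
For the left-hand term, the inclusion $\sub_i(M)\cong E_i^{a}\hookrightarrow \sub_i(N)\cong E_i^{a'}$ is a monomorphism of locally free modules, so its cokernel is locally free by \cite[Lemma 3.8]{GLS}; being supported only at $i$ it is then $\cong E_i^{a'-a}$ and in particular $E$-filtered. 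The right-hand term is a cokernel of a monomorphism of crystal modules with $\dim_K Q_i(M)=\dim_K M-a c_i<\dim_K M$, hence $E$-filtered by the induction hypothesis. Since the class of $E$-filtered modules is closed under extensions, $\Coker(f)$ is $E$-filtered, completing the induction.

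The step I expect to require the most care is the identity $f^{-1}(\sub_i(N))=\sub_i(M)$: it is precisely what makes the induction strictly decrease $\dim_K$ and what forces the left-hand term of the displayed sequence to be a \emph{locally free} quotient of $E_i^{a'}$ rather than an arbitrary one, so that it is $E$-filtered. It is essential to work with $\sub_i$ and $Q_i$ rather than with $\fac_i$ and $K_i$ — the analogous equality $f^{-1}(K_i(N))=K_i(M)$ can fail, because $\fac_i(M)\to\fac_i(N)$ need not be injective — and to use that $M$ is a crystal module, not merely $E$-filtered, so that $Q_i(M)$ is again crystal and the induction can proceed.
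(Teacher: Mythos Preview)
Your proof is correct and follows essentially the same approach as the paper's: the key identity $f^{-1}(\sub_i(N))=\sub_i(M)$ is exactly the paper's observation $\sub_i(M)=\sub_i(N)\cap M$, and the short exact sequence you write down is precisely the two-step filtration $0\subset (\sub_i(N)+M)/M\subset N/M$ used there. The only cosmetic difference is that you induct on $\dim_K M$ whereas the paper inducts on $\dim_K N$; both work, since $\sub_i(M)\neq 0$ forces $\sub_i(N)\neq 0$ and hence both $Q_i(M)$ and $Q_i(N)$ have strictly smaller dimension.
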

    \begin{proof}
        Without loss of generality, $M$ is a submodule of $N$. In this case, $\Coker(f) = \faktor{N}{M}$. We will prove the proposition by induction on the dimension of $N$. If $M$ is the zero module then the claim is clear. Assume $M$ is not zero. Let $i\in Q_0$ be a vertex such that $\sub_i(M) \neq 0$. Such an $i$ must exist because $M$ is $E$-filtered. By definition, we have
        \begin{equation}\label{Dyn:subIntersection}
            \sub_i(M) = \sub_i(N)\cap M.
        \end{equation}
        We consider the following filtration of $\faktor{N}{M}$:
        \[0 \subset \faktor{(\sub_i(N)+M) }{M} \subset \faktor{N}{M}.\]
        The first subquotient is
        \[\faktor{(\sub_i(N)+M) }{M} \overset{\eqref{Dyn:subIntersection}}{\cong} \faktor{\sub_i(N)}{\sub_i(M)}.\]
        This is a quotient of free $K[\varepsilon_i]$-modules hence it is a free  $K[\varepsilon_i]$-module, too. As a $\Pi$-module this implies that it is $E$-filtered. The second subquotient is given by 
        \[\faktor{N}{(\sub_i(N)+M)}.\]
        We describe it using the following sublattice of the submodule lattice of $N$:
        \[\begin{tikzcd}[column sep = 0.8em,row sep = 0.8em]
        	&& N \\
        	& {\sub_i(N)+M} \\
        	{\sub_i(N)} && M. \\
        	& {\sub_i(M)}
        	\arrow[no head, from=2-2, to=3-1]
        	\arrow[no head, from=2-2, to=3-3]
        	\arrow[no head, from=3-3, to=4-2]
        	\arrow[no head, from=3-1, to=4-2]
        	\arrow[no head, from=1-3, to=2-2]
        \end{tikzcd}\]
        We use the diagram to rewrite the second subquotient as
        \[\faktor{N}{(\sub_i(N)+M)} \cong \left(\faktor{N}{\sub_i(N)}\right)/\left(\faktor{(\sub_i(N)+M)}{\sub_i(N)}\right)\]
        where
        \[\faktor{N}{\sub_i(N)} = Q_i(N)\]
        and
        \[\faktor{(\sub_i(N)+M)}{\sub_i(N)} \cong \faktor{M}{\sub_i(M)} = Q_i(M).\]
        Therefore, the second subquotient is a quotient of crystal modules of lower dimension, hence $E$-filtered by induction. Now $\faktor{N}{M}$ has a filtration with $E$-filtered subquotients, so it is $E$-filtered.
    \end{proof}
    The dual statement is the following:
    \begin{Proposition}\label{Dyn:crystalKernel}
        Let $f:M\rightarrow N$ be an epimorphism between crystal modules. Then $\ker(f)$ is $E$-filtered.
    \end{Proposition}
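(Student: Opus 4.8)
The plan is to run the mirror image of the proof of Proposition~\ref{Dyn:crystalCokernel}, with the quotients $\fac_i$ and the submodules $K_i$ taking over the roles played there by the submodules $\sub_i$ and the quotients $Q_i$. Write $f$ as the projection $M\to M/L$ with $L=\ker(f)$ and induct on $\dim M$. If $M/L=0$ then $L=M$ is a crystal module and hence $E$-filtered, so we may assume $N:=M/L\neq 0$. Since $N$ is $E$-filtered and nonzero, the top subquotient of an $E$-filtration gives a surjection $N\twoheadrightarrow E_i$, so there is $i\in Q_0$ with $\fac_i(N)\neq 0$. The composite surjection $M\twoheadrightarrow N\twoheadrightarrow\fac_i(N)$ exhibits $\fac_i(N)$ as a factor module of $M$ supported only at $i$, so by maximality of $\fac_i(M)$ it factors through $M\twoheadrightarrow\fac_i(M)$; in particular $\fac_i(M)\neq 0$, hence $\dim K_i(M)<\dim M$.

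Next I would record the facts dual to the identity $\sub_i(M)=\sub_i(N)\cap M$ from the earlier proof. Consider the commutative diagram with exact rows $0\to K_i(X)\to X\to\fac_i(X)\to 0$ for $X\in\{M,N\}$ and vertical maps induced by $f$; its rightmost column is the surjection $\fac_i(M)\twoheadrightarrow\fac_i(N)$ found above, and by maximality of $\fac_i(N)$ as a factor module of $M/L$ supported at $i$, the kernel $P$ of this surjection is precisely the image of $L$ under $M\twoheadrightarrow\fac_i(M)$. In the snake lemma sequence the connecting map out of $P$ into the cokernel of $K_i(M)\to K_i(N)$ is therefore zero (a lift of an element of $P$ can be chosen inside $L\subseteq\ker f$), so $K_i(M)\to K_i(N)$ is surjective; denote it $g$, and note $\ker(g)=K_i(M)\cap L$.

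Now filter $L$ by $0\subseteq K_i(M)\cap L\subseteq L$. The bottom term $K_i(M)\cap L=\ker(g)$ is $E$-filtered by the inductive hypothesis, since $g$ is an epimorphism between the crystal modules $K_i(M)$ and $K_i(N)$ with $\dim K_i(M)<\dim M$. The top quotient satisfies $L/(K_i(M)\cap L)\cong (L+K_i(M))/K_i(M)=P\subseteq\fac_i(M)$. Because $N$ is a crystal module, $\fac_i(N)$ is locally free, and being supported only at $i$ it is a free $H_i$-module, hence projective; therefore the sequence $0\to P\to\fac_i(M)\to\fac_i(N)\to 0$ splits, $P$ is a direct summand of the free $H_i$-module $\fac_i(M)$, and so $P$ is itself free (projective modules over the local ring $H_i$ are free). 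A free $H_i$-module is a direct sum of copies of $E_i$ and hence $E$-filtered; since $E$-filtered modules are closed under extensions, $L$ is $E$-filtered, completing the induction.

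The place where I expect the argument to need genuine care — rather than mechanical mirroring — is the freeness of the top quotient $P$. The statement ``a quotient of free $H_i$-modules is free'', used in Proposition~\ref{Dyn:crystalCokernel}, has no valid literal dual: a submodule of a free $H_i$-module need not be free. The remedy is to arrange the diagram chase so that $P$ appears not merely as a submodule of a free module, but as a submodule \emph{with free cokernel}, which is exactly what forces the relevant short exact sequence to split. Producing this configuration, rather than an unusable one, from the snake lemma is the real content of the proof.
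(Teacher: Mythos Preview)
Your proof is correct and carries out exactly the dualisation that the paper only signals with the phrase ``The dual statement is the following'': replace $\sub_i$ by $\fac_i$, $Q_i$ by $K_i$, and induct on $\dim M$. You have also correctly isolated the one place where the mirror image is not automatic---the top subquotient $P$ is only a \emph{submodule} of the free $H_i$-module $\fac_i(M)$, and its freeness must be obtained from the splitting forced by the freeness of $\fac_i(N)$, not from a (false) ``submodules of free modules are free'' claim---a point the paper leaves implicit.
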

    Many homological properties of the classical preprojective algebras can be generalized. This requires restricting to locally free modules.
    \begin{Definition}
       We define bilinear functions $\alpha,\beta : \mathbb{Z}^{Q_0}\times\mathbb{Z}^{Q_0}\rightarrow \mathbb{Z}$ by
        \[\alpha (\mathbf{d},\mathbf{e}) = \sum_{i\in Q_0} c_i\mathbf{d}_i \mathbf{e}_i\]
        \[\beta (\mathbf{d},\mathbf{e}) = \sum_{(i,j)\in \Omega} c_i|c_{ij}|\mathbf{d}_{i} \mathbf{e}_{j}.\]
        The symmetrized Euler form $(-,?)$ is given by
        \[(\mathbf{d},\mathbf{e}) = \alpha(\mathbf{d},\mathbf{e}) +\alpha(\mathbf{e},\mathbf{d}) - \beta(\mathbf{d},\mathbf{e}) - \beta(\mathbf{e},\mathbf{d}).\]
    \end{Definition}
    The homological bilinear form for path algebras uses dimension vectors. For locally free $\Pi$-modules the dimension has to be replaced with the rank of the $H_i$-modules.
    \begin{Definition}
        Let $M$ be a locally free $\Pi$-module. The rank vector of $M$ is the integral vector 
        \[\rankv(M) = (\rank_{H_i}(M_i))_{i\in Q_0}\in \mathbb{Z}^{Q_0}.\]
        In particular, $\dimv(M)_i = c_i\rankv(M)_i$.
    \end{Definition}
    \begin{Theorem}
        Let $M,N$ be locally free $\Pi$-modules with rank vectors $\rankv (M) = \mathbf{d}$ and $\rankv (N) = \mathbf{e}$. Then we have
        \begin{enumerate}[label=(\roman*)]
            \item the Ext-formula
                \begin{equation}\label{Dynext:formula}
                    \dim \Hom_\Pi(M,N) - \dim \Ext^1_\Pi(M,N) + \dim \Hom_\Pi(N,M) = (\mathbf{d},\mathbf{e}),
                \end{equation}
            \item the Ext-duality
                \begin{equation}\label{Dynext:symmetry}
                    \dim \Ext^1_\Pi(M,N) = \dim \Ext^1_\Pi(N,M),
                \end{equation}
        \end{enumerate}
    \end{Theorem}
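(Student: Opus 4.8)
The plan is to obtain the symmetry \eqref{Dynext:symmetry} as a formal consequence of the $\Ext$-formula \eqref{Dynext:formula}, and to prove \eqref{Dynext:formula} by adapting the argument for classical preprojective algebras to the $H_i$-relative setting of \cite{GLS}. For the first point, applying \eqref{Dynext:formula} with $M$ and $N$ interchanged gives
\[ \dim\Hom_\Pi(N,M) - \dim\Ext^1_\Pi(N,M) + \dim\Hom_\Pi(M,N) = (\mathbf{e},\mathbf{d}); \]
since the symmetrized Euler form is symmetric we have $(\mathbf{e},\mathbf{d}) = (\mathbf{d},\mathbf{e})$, and subtracting this equation from \eqref{Dynext:formula} leaves $\dim\Ext^1_\Pi(M,N) = \dim\Ext^1_\Pi(N,M)$. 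So everything reduces to \eqref{Dynext:formula}.

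For \eqref{Dynext:formula}, the first step is to set up the \emph{standard complex} $0 \to P^2 \xrightarrow{d_2} P^1 \xrightarrow{d_1} P^0 \xrightarrow{d_0} M \to 0$ of $M$, in which $P^0$ and $P^2$ are both $\bigoplus_{i\in Q_0}\Pi e_i\otimes_{H_i}M_i$ and $P^1 = \bigoplus_{(i,j)\in\overleftrightarrow{\Omega}}\bigoplus_{g=1}^{g_{ij}}\Pi e_i\otimes_{H_i}W_{ij}(M)$, where $W_{ij}(M)$ is the $H_i$-module of rank $f_{ij}\rankv(M)_j$ attached functorially to $M_j$ through the commutativity relation $\varepsilon_i^{f_{ji}}\alpha_{ij}^{(g)} = \alpha_{ij}^{(g)}\varepsilon_j^{f_{ij}}$ --- the exponents $f_{ij}$ being exactly what records this intertwining. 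Local freeness of $M$ is what guarantees that each $M_i$ and each $W_{ij}(M)$ is a free $H_i$-module, so that $P^0,P^1,P^2$ are projective. The map $d_0$ is multiplication, $d_1$ is assembled from the actions of the arrows $\alpha_{ij}^{(g)}$ together with the appropriate powers of the $\varepsilon$'s, and $d_2$ is assembled from the mesh relations, so that $d_1\circ d_2 = 0$ holds by the mesh relation. Exactness at $M$ and at $P^1$ is checked one vertex at a time and holds for every module; injectivity of $d_2$, which turns $P^\bullet$ into a projective resolution, holds when $\Pi$ has global dimension two.

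Next, applying $\Hom_\Pi(-,N)$ and using $\Hom_\Pi(\Pi e_i\otimes_{H_i}X,N)\cong\Hom_{H_i}(X,N_i)$ together with $\dim_K\Hom_{H_i}(H_i,H_i) = c_i$, each of the two outer terms contributes $\sum_{i}c_i\rankv(M)_i\rankv(N)_i = \alpha(\mathbf{d},\mathbf{e})$ while the middle term contributes $\sum_{(i,j)\in\overleftrightarrow{\Omega}}g_{ij}f_{ij}c_i\rankv(M)_j\rankv(N)_i = \beta(\mathbf{d},\mathbf{e}) + \beta(\mathbf{e},\mathbf{d})$, where the identity $g_{ij}f_{ij} = |c_{ij}|$ and the symmetry $c_i|c_{ij}| = c_j|c_{ji}|$ of $DC$ are used to split the sum over $\overleftrightarrow{\Omega} = \Omega\sqcup\Omega^*$ into the two $\beta$-terms. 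Since $\alpha$ is symmetric, the Euler characteristic of $\Hom_\Pi(P^\bullet,N)$ equals $2\alpha(\mathbf{d},\mathbf{e}) - \beta(\mathbf{d},\mathbf{e}) - \beta(\mathbf{e},\mathbf{d}) = (\mathbf{d},\mathbf{e})$. When $P^\bullet$ is a resolution this Euler characteristic is $\dim\Hom_\Pi(M,N) - \dim\Ext^1_\Pi(M,N) + \dim\Ext^2_\Pi(M,N)$, so \eqref{Dynext:formula} will follow once one establishes the functorial duality $\Ext^2_\Pi(M,N)\cong D\Hom_\Pi(N,M)$; equivalently, the bimodule enhancement of $P^\bullet$ is self-dual under $\Hom_{\Pi\otimes_K\Pi^{\mathrm{op}}}(-,\Pi\otimes_K\Pi^{\mathrm{op}})$ up to a shift by two, reflecting the symmetry of the mesh relations in the pairs $\alpha_{ij}^{(g)},\alpha_{ji}^{(g)}$. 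I expect this self-duality to be the \textbf{main obstacle}: the base $\prod_i H_i$ is not semisimple, so the terms of $P^\bullet$ are $(H_i,H_j)$-bimodules rather than plain vector spaces, the $\varepsilon$-twists recorded by the $f_{ij}$ must be carried along correctly, and local freeness of both $M$ and $N$ has to be invoked repeatedly to keep the relevant $H_i$-modules free and the rank count valid. In the remaining case, where $P^\bullet$ is only a complex with $\ker d_2 \neq 0$, one argues as in the classical situation by identifying $\ker d_2$ with a twist of $M$, the same duality again supplying the needed comparison. Alternatively, one can follow \cite{GLS}, realising $\Pi$ as the preprojective algebra of the algebra $H(C,D)$ and deducing \eqref{Dynext:formula} from the fact that $(-,?)$ is the symmetrization of the Euler form of $H(C,D)$ on locally free modules.
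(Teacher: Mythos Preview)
The paper does not prove this theorem at all: its proof reads, in full, ``This is Theorem 12.6 in \cite{GLS}.'' So there is nothing to compare against beyond noting that the result is imported wholesale from Gei\ss--Leclerc--Schr\"oer.

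Your proposal instead sketches the actual content of that theorem in \cite{GLS}. The reduction of (ii) to (i) via the symmetry of $(-,-)$ is correct and clean. Your outline for (i) --- standard complex built from $\Pi e_i\otimes_{H_i}(-)$, dimension count of $\Hom_\Pi(P^\bullet,N)$ giving $(\mathbf{d},\mathbf{e})$, and identification of the $\Ext^2$-term with $D\Hom_\Pi(N,M)$ --- is the right shape and matches what \cite{GLS} does. But, as you yourself flag, the argument is not complete: the bimodule self-duality of the standard resolution (the 2-Calabi--Yau property in this relative setting) is asserted rather than proved, and the Dynkin case where $d_2$ fails to be injective is only gestured at. These are exactly the technical points that occupy most of the work in \cite{GLS}, so what you have written is an honest roadmap rather than a proof. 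For the purposes of this paper a citation is appropriate; if you want a self-contained argument you would need to carry out the $H_i$-relative duality computation in full.
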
   
    \begin{proof}
       This is Theorem 12.6 in \cite{GLS}.
    \end{proof}
    \subsection{Representation varieties of preprojective algebras}
    The classes introduced earlier are closely related to the representation varieties of $\Pi$. This section is based on \cite{GLS4}. First, we need to replace the $Q_0$-graded vector spaces used in the classical case.
    \begin{Definition}
        Let $T$ be the subalgebra of $\Pi$ generated by the $e_i$ and the $\varepsilon_i$. So
        \[T \cong \prod_{i\in Q_0} H_i.\]
    \end{Definition}
    We will examine the moduli space of $\Pi$-module structures on a given locally free $T$-module.
    \begin{Definition}
        Let $V= \bigoplus_{i\in Q_0} V_i$ be a locally free $T$-module. Consider the vector space
        \[\prod_{\alpha_{ij}^{(g)}\in \overleftrightarrow{Q}_1} \Hom_{K}(V_{j},V_{i})\]
        interpreted as an affine variety. The representation variety of $\Pi$ on $V$ is the closed subvariety $R_\Pi(V)$ described by the relations of $\Pi$. We will identify elements of $R_\Pi(V)$ with the modules they describe.\\
        The general linear group of $V$ is the group
        \[GL(V) = \prod_{i\in Q_0} \Aut_{H_i}(V_i).\]
        It acts on $R_\Pi(V)$ by conjugation
        \begin{align*}
            GL(V) \times R_\Pi(V)&\rightarrow R_\Pi(V)\\
            \left(\left(g_i\right)_{i\in Q_0},\left(M_{ij}^{(g)}\right)_{\alpha_{ij}^{(g)}\in \overleftrightarrow{Q}_1}\right)& \mapsto \left(g_{i}M_{ij}g_{j}^{-1}\right)_{\alpha_{ij}^{(g)}\in \overleftrightarrow{Q}_1}.
        \end{align*}
        The orbits of this action are the isomorphism classes of locally free $\Pi$-modules.
    \end{Definition}
    The variety of all $\Pi$-representations is too big for our purposes. We will restrict to subvarieties corresponding to the classes of $E$-filtered, respectively crystal modules. In the classical case this corresponds to the restriction to nilpotent modules in \cite{LusztigSemicanonical}.
    \begin{Definition}
        Let $V= \bigoplus_{i\in Q_0} V_i$ be a locally free $T$-module. We define the subvariety of $E$-filtered modules as
        \[ R_\Pi^E(V) = \overline{\{M \in R_\Pi(V)\mid M \textnormal{ is } E\textnormal{-filtered}\}} \]
        and the subvariety of crystal modules as
        \[ R_\Pi^C(V) = \overline{\{M \in R_\Pi(V)\mid M \textnormal{ is a crystal module}\}}. \]
    \end{Definition}
    Without taking closures, these sets would only be constructible. Both of these subvarieties are $GL(V)$-stable and we clearly have
    \[ R_\Pi^C(V) \subset R_\Pi^E(V) \subset R_\Pi(V). \]
    The relation between the first two varieties can be described more precisely.
    \begin{Proposition}\label{Dyn:CrystalComps}
        The variety $R^C_\Pi(V)$ is the union of the irreducible components of $R^E_\Pi(V)$ of maximum dimension. In particular, it is equidimensional.
    \end{Proposition}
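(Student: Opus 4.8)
The proposition is equivalent to the conjunction of the following two claims, which I would prove by simultaneous induction on $\dim_K V$: \textbf{(A)} every irreducible component of $R^C_\Pi(V)$ is an irreducible component of $R^E_\Pi(V)$ of maximal dimension, and \textbf{(B)} every irreducible component of $R^E_\Pi(V)$ of maximal dimension is contained in $R^C_\Pi(V)$. Granting these, a top‑dimensional component $W$ of $R^E_\Pi(V)$ lies in $R^C_\Pi(V)$ by (B), hence inside one of its components $Z$, and $Z$ is again a component of $R^E_\Pi(V)$ by (A), forcing $W=Z$; conversely every component of $R^C_\Pi(V)$ is a top‑dimensional component of $R^E_\Pi(V)$ by (A). So the components of the two varieties match up, which is exactly the assertion of the proposition, equidimensionality included. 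Throughout I would use that the crystal modules, and the $E$‑filtered modules, form constructible subsets of $R_\Pi(V)$, so that the generic point of a component of $R^C_\Pi(V)$ is an honest crystal module and the generic point of a component of $R^E_\Pi(V)$ is $E$‑filtered; this is what makes the inductive recognition of the generic module possible.

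For the inductive step assume $V\neq 0$, fix the component of $R^E_\Pi(V)$ under analysis, and choose a vertex $i\in Q_0$ with $\sub_i(M)\neq 0$ for generic $M$ in it — possible because an $E$‑filtered module has a simple submodule $E_j$, and one may argue symmetrically with $\fac_i$ and $K_i$. Stratify $R^E_\Pi(V)$ by the isomorphism types of the $H_i$‑modules $\sub_i(M)$ and $\fac_i(M)$; this is a finite, $GL(V)$‑stable, locally closed stratification whose stratum $U$ on which both are locally free (direct sums of $E_i$) is open. On $U$ the canonical sequences $0\to\sub_i(M)\to M\to Q_i(M)\to 0$ and $0\to K_i(M)\to M\to\fac_i(M)\to 0$ have locally free outer terms supported at $i$ and middle terms of strictly smaller $K$‑dimension, and Propositions \ref{Dyn:crystalKernel} and \ref{Dyn:crystalCokernel} make these middle terms $E$‑filtered. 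Sending $M\in U$ to $(\sub_i(M),Q_i(M))$, together with the gluing datum recovering $M$ as an extension subject to the $\Pi$‑relations, presents a dense open part of each top‑dimensional component of $U$ as a fibre bundle over a top‑dimensional — hence, by the inductive hypothesis, crystal — component of the representation variety for the smaller rank vector $\rankv Q_i(M)$, the fibre being cut out of an affine space by linear conditions so that its dimension is governed by $\Hom$ and $\Ext^1$ between the pieces. Running the definition of a crystal module through this fibration and its $\fac_i$‑mirror then shows that the generic module of such a top component of $U$ is a crystal module, which gives (A) and half of (B).

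What remains is the dimension comparison across strata. Using \eqref{Dynext:formula} to write orbit dimensions as $\dim GL(V)-\dim\End_\Pi(M)$, the additivity of $\rankv$ along the two displayed sequences, and the long exact $\Hom$/$\Ext^1$ sequences, one obtains the dimension of each stratum of $R^E_\Pi(V)$ as an explicit function of the $H_i$‑module types of $\sub_i$ and $\fac_i$, and one must check that this function is maximised precisely on the open stratum $U$ and there equals a number $d(\rankv V)$ depending only on the rank vector; since $d$ is additive along the sequences it coincides by induction with $\dim R^C_\Pi(V')$ for all smaller $V'$, with all components of $R^C_\Pi(V')$ attaining it. This yields (B) — every top component of $R^E_\Pi(V)$ meets $U$, hence lies in the crystal locus — and completes (A). I expect this last step to be the main obstacle: because $H_i=K[\varepsilon_i]/(\varepsilon_i^{c_i})$ is a truncated polynomial ring rather than a field, $\sub_i(M)$ and $\fac_i(M)$ can degenerate among $H_i$‑modules in many ways, and one has to show that every degeneration away from ``locally free'' strictly loses dimension — this is the numerical heart of the crystal recursion and the reason $R^E_\Pi(V)$ fails to be equidimensional while $R^C_\Pi(V)$ does not. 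It is essentially the vertex‑stratification analysis of $R^E_\Pi(V)$ carried out in \cite{GLS4}; everything else above is formal from Propositions \ref{Dyn:crystalCokernel}, \ref{Dyn:crystalKernel} and the Ext‑formula.
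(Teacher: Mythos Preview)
The paper does not prove this proposition; it simply cites \cite[Proposition~4.4]{GLS4}. So there is nothing to compare your argument against in the paper itself --- you are attempting to reproduce the cited result.

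Your overall architecture (induction on $\dim_K V$, stratifying by the $H_i$-isomorphism type of $\sub_i(M)$ and $\fac_i(M)$, showing the locally free stratum is open and of top dimension, and running the crystal recursion through the fibration) is indeed the shape of the argument in \cite{GLS4}. But there is a genuine circularity in your use of Propositions~\ref{Dyn:crystalCokernel} and~\ref{Dyn:crystalKernel}. Those propositions assume that \emph{both} modules in the map are crystal; when you are proving claim~(B), the generic $M$ in a top-dimensional component of $R^E_\Pi(V)$ is only known to be $E$-filtered, not crystal, so you cannot invoke them to conclude that $Q_i(M)$ or $K_i(M)$ is $E$-filtered. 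You need this conclusion to place $Q_i(M)$ inside $R^E_\Pi$ of the smaller rank vector before you can appeal to the inductive hypothesis. In \cite{GLS4} this is handled by a direct analysis of the stratification that does not pass through the crystal hypothesis on $M$; the $E$-filteredness of $Q_i(M)$ on the open stratum is obtained by other means (essentially by controlling the $H_i$-module structure and the extension data explicitly).

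You are candid that the dimension comparison across strata --- showing that every non-locally-free degeneration of $\sub_i(M)$ or $\fac_i(M)$ strictly drops the dimension --- is the crux, and you defer it to \cite{GLS4}. That is fair, but combined with the gap above it means your sketch does not stand on its own: both the dimension count \emph{and} the $E$-filteredness of $Q_i(M)$, $K_i(M)$ on the open stratum require input you have not supplied. What you have written is an accurate roadmap of the GLS4 proof rather than an independent argument.
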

    \begin{proof}
        This is Proposition 4.4 in \cite{GLS4}.
    \end{proof}
    We can now introduce $\comp$, the set on which the binary operation will be defined.
    \begin{Definition}
        Let $V$ be a locally free $T$-module with rank vector $\mathbf{d}$. Let $\comp(V)$ be the set of irreducible components of $R^C_\Pi(V)$. We set $\comp(\mathbf{d}) = \comp(V)$. Up to a canonical bijection this does not depend on $V$. Finally, we set
        \[\comp = \bigcup_{\mathbf{d} \in \mathbb{N}_0^{Q_0}}\comp(\mathbf{d}).\]
    \end{Definition}
    The set $\comp$ provides a geometric model of the crystal graph $B(-\infty)$, see \cite{GLS4}.
    A complete description of the case $B_2$ with minimal symmetrizer and rank vector $(2,1)$ can be found in section 8.2 of \cite{GLS4}. 
    The next results will help to compute dimensions of various spaces later.
    \begin{Proposition}
        Let $V$ and $V'$ be locally free $T$-modules with rank vectors $\rankv(V) = \mathbf{d}$ and $\rankv(V') = \mathbf{e}$. Then we have
        \begin{enumerate}[label=(\roman*)]
            \item \begin{equation}\label{dimRepVar}
                \dim R^C_\Pi(V) = \beta(\mathbf{d},\mathbf{d}),
            \end{equation}
            \item \begin{equation}\label{dimHomSpace}
                \dim \Hom_T(V,V') = \alpha(\mathbf{d},\mathbf{e}),
            \end{equation}
            \item \begin{equation}\label{dimGL}
                \dim GL(V) = \alpha(\mathbf{d},\mathbf{d}).
            \end{equation}
        \end{enumerate}
    \end{Proposition}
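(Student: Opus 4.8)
For parts (ii) and (iii) the plan is a direct computation from local freeness, which gives $V_i\cong H_i^{\mathbf{d}_i}$ and $V_i'\cong H_i^{\mathbf{e}_i}$. Then $\Hom_T(V,V')=\prod_{i\in Q_0}\Hom_{H_i}(V_i,V_i')\cong\prod_{i\in Q_0}M_{\mathbf{e}_i\times\mathbf{d}_i}(H_i)$, and each factor is a free $H_i$-module of rank $\mathbf{d}_i\mathbf{e}_i$, hence of $K$-dimension $c_i\mathbf{d}_i\mathbf{e}_i$; summing gives \eqref{dimHomSpace}. Likewise $GL(V)=\prod_{i\in Q_0}\Aut_{H_i}(V_i)$, and $\Aut_{H_i}(V_i)$ is the unit group of the finite-dimensional $K$-algebra $\End_{H_i}(V_i)\cong M_{\mathbf{d}_i}(H_i)$; the unit group of a finite-dimensional algebra is a nonempty Zariski-open subset (the locus where left multiplication is invertible) and thus of full dimension, so $\dim\Aut_{H_i}(V_i)=\dim_K M_{\mathbf{d}_i}(H_i)=c_i\mathbf{d}_i^2$, and summing gives \eqref{dimGL}.

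For \eqref{dimRepVar} I would first invoke Proposition~\ref{Dyn:CrystalComps}: since $R^C_\Pi(V)$ is the union of the irreducible components of $R^E_\Pi(V)$ of maximal dimension, $\dim R^C_\Pi(V)=\dim R^E_\Pi(V)$, and it suffices to show this equals $\beta(\mathbf{d},\mathbf{d})$. The lower bound is elementary. Inside $R_\Pi(V)$ consider the locus $Y$ on which every arrow of $\overleftrightarrow{Q}_1$ lying in $\Omega^*$ acts as zero. Since $(Q_0,\Omega)$ is acyclic, $\overleftrightarrow{Q}(C)$ has no $2$-cycle, so in each summand $\alpha^{(g)}_{ij}\alpha^{(g)}_{ji}$ of a mesh relation one of the two arrows lies in $\Omega^*$; hence the mesh relations vanish identically on $Y$, and $Y$ is exactly the variety of module structures over the subalgebra $H\subseteq\Pi$ generated by $T$ and the $\Omega$-arrows. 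This is cut out of affine space by the commutativity relations alone, which are linear, so $Y$ is an irreducible linear subspace, and its points are $E$-filtered $\Pi$-modules (order the vertices so that $\Omega$ runs downhill and peel off generalized simples supported at the successive sinks), so $Y\subseteq R^E_\Pi(V)$. For a single arrow the relation $\varepsilon_i^{f_{ji}}\alpha=\alpha\varepsilon_j^{f_{ij}}$ says that $\alpha\colon V_j\to V_i$ is linear over the ring $K[s]/(s^t)$ generated by the common operator $s:=\varepsilon_i^{f_{ji}}$ (acting as $\varepsilon_j^{f_{ij}}$ on $V_j$), where $t=c_i/f_{ji}=c_j/f_{ij}$ is an integer: symmetry of $DC$ gives $c_if_{ij}=c_jf_{ji}$, and $\gcd(f_{ij},f_{ji})=1$ then forces $f_{ji}\mid c_i$. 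Since $H_i$ and $H_j$ are free over $K[s]/(s^t)$ of ranks $f_{ji}$ and $f_{ij}$, the solution space inside $\Hom_K(H_j,H_i)$ has dimension $f_{ij}f_{ji}t=c_if_{ij}$; allowing for the $\mathbf{d}_i\mathbf{d}_j$ matrix entries and summing $g_{ij}$ copies over each $(i,j)\in\Omega$, with $g_{ij}f_{ij}=|c_{ij}|$, gives $\dim Y=\sum_{(i,j)\in\Omega}c_i|c_{ij}|\mathbf{d}_i\mathbf{d}_j=\beta(\mathbf{d},\mathbf{d})$, hence $\dim R^E_\Pi(V)\ge\beta(\mathbf{d},\mathbf{d})$.

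The reverse inequality $\dim R^E_\Pi(V)\le\beta(\mathbf{d},\mathbf{d})$ is the one genuine obstacle, and I would cite \cite{GLS4}, where \eqref{dimRepVar} is established (one may equally cite it for the whole identity). It is the symmetrizable analogue of Lusztig's theorem that the nilpotent variety of a classical preprojective algebra is Lagrangian in the cotangent bundle of the representation space, and it is part of the package in \cite{GLS4} underlying the identification of $\comp$ with $B(-\infty)$. A self-contained proof would stratify $R^E_\Pi(V)$ by the isomorphism type of a composition series, realizing each stratum (up to the $GL(V)$-action) as a bundle over a flag variety of sub-$T$-modules with fibre the affine variety of $\Pi$-structures respecting a fixed flag; the difficulty is that the naive dimension count of such fibres is too large, and one needs the mesh relations to cut the dimension down by exactly the right amount — the symplectic input for which there is no elementary shortcut.
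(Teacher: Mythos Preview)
Your proposal is correct and aligns with the paper's proof: the paper simply cites \cite{GLS4} (Corollary 4.5) for \eqref{dimRepVar} and says \eqref{dimHomSpace} and \eqref{dimGL} follow from the representation theory of $K[X]/(X^{c_i})$, which is exactly what you spell out. Your explicit lower-bound construction for \eqref{dimRepVar} via the locus $Y$ of $H(C,D,\Omega)$-modules is a nice supplement the paper does not give, but since you (correctly) still defer the upper bound to \cite{GLS4}, the overall approach is the same.
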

    \begin{proof}
        $(i)$ is Corollary 4.5 in \cite{GLS4} while $(ii)$ and $(iii)$ follow from the representation theory of $K[X]/(X^{c_i})$.
    \end{proof}
    \begin{Proposition}\label{Dyn:semicont}
        Let $V$ and $V'$ be locally free $T$-modules. Then the functions 
        \begin{align*}
            R_\Pi(V)\times R_\Pi(V') &\rightarrow \mathbb{Z}\\
            (M,N)&\mapsto \dim \Hom_\Pi(M,N),\\
            R_\Pi(V)\times R_\Pi(V') &\rightarrow \mathbb{Z}\\
            (M,N)&\mapsto \dim \Ext^1_\Pi(M,N)
        \end{align*}
        are upper semicontinuous.
    \end{Proposition}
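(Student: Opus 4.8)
The plan is to treat the $\Hom$-function directly with the standard semicontinuity-of-rank argument and to deduce the $\Ext^1$-statement from the Ext-formula \eqref{Dynext:formula}.

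First I would fix the locally free $T$-modules $V=\bigoplus_i V_i$ and $V'=\bigoplus_i V'_i$, with rank vectors $\mathbf{d}$ and $\mathbf{e}$, and observe that for $M\in R_\Pi(V)$ and $N\in R_\Pi(V')$ a $\Pi$-linear map $M\to N$ is exactly a $T$-linear map $\phi=(\phi_i)_i\in\Hom_T(V,V')$ with $\phi_iM^{(g)}_{ij}=N^{(g)}_{ij}\phi_j$ for all arrows $\alpha_{ij}^{(g)}$; the conditions involving the loops $\varepsilon_i$ are already built into $\Hom_T(V,V')$ and do not vary, since the $T$-structures of $V$ and $V'$ are fixed. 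Thus $\Hom_\Pi(M,N)=\ker\Psi_{(M,N)}$ for the $K$-linear map
\[\Psi_{(M,N)}\colon\Hom_T(V,V')\longrightarrow\prod_{\alpha_{ij}^{(g)}\in\overleftrightarrow{Q}_1}\Hom_K(V_j,V'_i),\qquad\phi\longmapsto\bigl(\phi_iM^{(g)}_{ij}-N^{(g)}_{ij}\phi_j\bigr),\]
whose matrix entries are linear --- hence polynomial --- in the coordinates of $R_\Pi(V)\times R_\Pi(V')$. Using \eqref{dimHomSpace} we get $\dim\Hom_\Pi(M,N)=\alpha(\mathbf{d},\mathbf{e})-\rank\Psi_{(M,N)}$, and since the locus $\{\rank\Psi_{(M,N)}\geq r\}$ is open for each $r$ --- being defined by non-vanishing of the $r\times r$ minors of $\Psi$ --- the rank is lower semicontinuous and $(M,N)\mapsto\dim\Hom_\Pi(M,N)$ is upper semicontinuous.

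For the $\Ext^1$-function I would then use that every module parametrised by $R_\Pi(V)$, resp.\ $R_\Pi(V')$, is locally free of rank vector $\mathbf{d}$, resp.\ $\mathbf{e}$ (its $T$-restriction is the fixed locally free module $V$, resp.\ $V'$), so that \eqref{Dynext:formula} applies and rearranges to
\[\dim\Ext^1_\Pi(M,N)=\dim\Hom_\Pi(M,N)+\dim\Hom_\Pi(N,M)-(\mathbf{d},\mathbf{e}).\]
On the right, $(M,N)\mapsto\dim\Hom_\Pi(M,N)$ is upper semicontinuous by the previous step, and $(M,N)\mapsto\dim\Hom_\Pi(N,M)$ is the composite of the continuous swap isomorphism $R_\Pi(V)\times R_\Pi(V')\to R_\Pi(V')\times R_\Pi(V)$ with the upper semicontinuous Hom-function on $R_\Pi(V')\times R_\Pi(V)$, hence also upper semicontinuous; a sum of upper semicontinuous functions minus the constant $(\mathbf{d},\mathbf{e})$ is upper semicontinuous, which finishes the proof. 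I do not expect a real obstacle: the argument is routine, and the only points worth checking carefully are that $R_\Pi(V)$ really parametrises only locally free modules --- which makes \eqref{Dynext:formula} applicable and holds automatically here --- and the bookkeeping that shows $\Psi_{(M,N)}$ depends algebraically on the pair $(M,N)$.
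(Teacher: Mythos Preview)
Your argument is correct. The paper itself does not give a proof at all: it simply cites Lemma~4.3 of \cite{BasicsModuleVarieties} and moves on. Your approach is genuinely different in that it is self-contained within the paper's own framework. For $\Hom$ you give the standard kernel-of-a-varying-linear-map argument, which is exactly what the cited lemma does in greater generality; your observation that the $\varepsilon_i$-compatibility is already absorbed into $\Hom_T(V,V')$ (because the $T$-structure is fixed on $R_\Pi(V)$) is the right way to set this up here. For $\Ext^1$ you bypass any resolution-based argument entirely by invoking the Ext-formula \eqref{Dynext:formula}, which is legitimately available at this point in the paper and reduces the claim to the $\Hom$ case. This is a clean shortcut that the cited reference does not take; it exploits the specific homological structure of locally free $\Pi$-modules rather than appealing to a general semicontinuity result for finite-dimensional algebras. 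The trade-off is that your $\Ext^1$ argument would not survive outside the locally free setting, whereas the cited lemma would, but on $R_\Pi(V)\times R_\Pi(V')$ that restriction is automatic, as you note.
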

    \begin{proof}
        This is a special case of Lemma 4.3 in \cite{BasicsModuleVarieties}.
    \end{proof}
    \begin{Proposition}
        Let $M$ and $N$ be locally free $\Pi$-modules. Then there is a short exact sequence
        \begin{equation}\label{Dyn:goodcomplex}
            0 \rightarrow \Hom_\Pi(M,N) \rightarrow \Hom_T(M,N)\rightarrow \Der_\Pi(M,N) \rightarrow \Ext^1_\Pi(M,N) \rightarrow 0.
        \end{equation}
        Here, $\Der_\Pi(M,N)$ is the set of derivations which send all $e_i$ and all $\varepsilon_i$ to 0.
    \end{Proposition}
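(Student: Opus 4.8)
The plan is to produce the four–term sequence explicitly, by hand, rather than by extracting it from a projective resolution of $M$; the only structural ingredient needed is that a locally free module is projective over the subalgebra $T$.

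\emph{The maps.} For $a\in\Pi$ write $a_M\in\End_K(M)$, $a_N\in\End_K(N)$ for the induced operators. To $\phi\in\Hom_T(M,N)$ I would attach $\delta(\phi)\colon\Pi\to\Hom_K(M,N)$, $a\mapsto\phi\circ a_M-a_N\circ\phi$; a one-line computation shows $\delta(\phi)$ is a derivation for the usual $\Pi$-bimodule structure on $\Hom_K(M,N)$, and it annihilates every $e_i$ and $\varepsilon_i$ precisely because $\phi$ is $T$-linear, so $\delta(\phi)\in\Der_\Pi(M,N)$. Since $\delta(\phi)=0$ if and only if $\phi$ is $\Pi$-linear, this already yields exactness at $\Hom_T(M,N)$ together with the natural inclusion $\Hom_\Pi(M,N)\hookrightarrow\Hom_T(M,N)$. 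To $d\in\Der_\Pi(M,N)$ I would attach the module $N\ast_d M$: the $T$-module $N\oplus M$ equipped with the twisted $\Pi$-action $a\cdot(n,m)=(a_N n+d(a)(m),\,a_M m)$. The derivation identity for $d$ is exactly the associativity of this action, while $d(e_i)=d(\varepsilon_i)=0$ guarantees that the underlying $T$-module structure is the honest direct sum, so $N\ast_d M$ is locally free (locally free modules being closed under extensions). The evident inclusion and projection then form a short exact sequence $0\to N\to N\ast_d M\to M\to 0$, giving a class $\theta(d)\in\Ext^1_\Pi(M,N)$; the assignment $\theta$ is $K$-linear because Baer sums of these extensions correspond to sums of derivations.

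\emph{Exactness at the remaining two spots.} At $\Der_\Pi(M,N)$: if $d=\delta(\phi)$ then $m\mapsto(\phi(m),m)$ is a $\Pi$-linear section of $N\ast_d M\to M$, so $\theta(d)=0$; conversely any $\Pi$-linear section of that map has the form $m\mapsto(\sigma(m),m)$ with $\sigma$ a priori only $K$-linear, and $\Pi$-linearity forces both $\sigma\in\Hom_T(M,N)$ (test on $e_i$ and $\varepsilon_i$) and $d=\delta(\sigma)$; hence $\ker\theta=\Image\delta$. For surjectivity of $\theta$ I would start from an arbitrary extension $0\to N\xrightarrow{\iota}E\xrightarrow{\pi}M\to 0$ of $\Pi$-modules: restricted to $T$ it splits, since $M$ is locally free hence projective over $T\cong\prod_i H_i$ (being free over each $H_i$), so $\pi$ admits a $T$-linear section $\tau$; transporting the $\Pi$-structure of $E$ along $(n,m)\mapsto\iota(n)+\tau(m)$ then identifies $E$, as an extension of $M$ by $N$, with $N\ast_d M$ for the derivation $d$ determined by $d(a)(m)=\iota^{-1}\big(a\cdot\tau(m)-\tau(a_M m)\big)$.

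The step that needs the most care is this last identification, and it is the only place local freeness is invoked. The point is that $\Der_\Pi(M,N)$ carries two built-in constraints — a derivation there must respect the defining relations of $\Pi$ and must vanish on all $e_i$ and $\varepsilon_i$ — and the $d$ read off from a general extension must satisfy both. The first holds automatically because $d$ comes from an actual $\Pi$-module structure on $E$; the second is exactly what the choice of a $T$-linear rather than merely $K$-linear splitting $\tau$ buys, since $T$-linearity of $\tau$ translates into $d(e_i)=d(\varepsilon_i)=0$. Once the maps and these verifications are assembled, they fit together into the claimed four-term exact sequence.
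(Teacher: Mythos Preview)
The paper states this proposition without proof or reference, presumably treating it as a known fact in the style of the surrounding results quoted from \cite{GLS} and \cite{GLS4}. Your argument is correct and is precisely the standard Hochschild-type construction one would expect: the inner-derivation map $\delta$, the explicit twisted extension $N\ast_d M$, and the use of $T$-projectivity of a locally free $M$ to split an arbitrary extension over $T$ are exactly the ingredients needed, and your verifications of exactness at each term are accurate. One small remark: you note in passing that $N\ast_d M$ is locally free, but this is not needed for the argument since $\Ext^1_\Pi(M,N)$ is computed in the full module category; the only place local freeness enters is, as you correctly identify, the $T$-projectivity of $M$ used for surjectivity of $\theta$.
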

\section{The binary operation \texorpdfstring{$*$}{*}}
    Generic extensions provide a binary operation on $\comp$. This chapter is based on \cite{lapid1} where the operation was introduced and studied for classical preprojective algebras. Most results and proofs in this chapter are direct generalizations to the symmetrizable case.
    \subsection{Construction}
    We first need an appropriate notion of generic modules.
    \begin{Definition}
        For each component $C\in \comp$ let $\widehat{C}$ be the interior of the set of crystal modules in $C$. By construction, this is a dense open subset of $C$.\\
        For $C_1, C_2 \in \comp$,  we define the generic values
        $\hom_\Pi(C_1,C_2)$ and $\ext^1_\Pi(C_1,C_2)$ as
        \[\hom_\Pi(C_1,C_2) = \min\{\dim \Hom_\Pi(M_1,M_2) \mid M_1 \in C_1,M_2 \in C_2\}\]
        \[\ext^1_\Pi(C_1,C_2) = \min\{\dim \Ext^1_\Pi(M_1,M_2) \mid M_1 \in C_1,M_2 \in C_2\}\]
        and the set of generic pairs as
        \[S(C_1,C_2) = \{(M_1,M_2) \in \widehat{C}_1 \times \widehat{C}_2\ \mid \dim  \Ext^1_\Pi(M_1,M_2) = \ext^1_\Pi(C_1,C_2)\}.\]
    \end{Definition}
    The set $S(C_1,C_2)$ is open because $\dim \Ext^1_\Pi(-,?)$ is upper semicontinuous by Proposition \ref{Dyn:semicont}. Elements of $S(C_1,C_2)$ also achieve the generic values for $\hom$.
    \begin{Lemma}
        Let $(M_1,M_2) \in S(C_1,C_2)$. Then we also have
        \[ \dim \Hom_\Pi(M_1,M_2) = \hom_\Pi(C_1,C_2),\]
        \[ \dim \Hom_\Pi(M_2,M_1) = \hom_\Pi(C_2,C_1).\]
    \end{Lemma}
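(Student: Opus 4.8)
The plan is to combine the Ext-formula \eqref{Dynext:formula} with a straightforward irreducibility argument. Write $\mathbf{d} = \rankv(M_1)$ and $\mathbf{e} = \rankv(M_2)$, and abbreviate $a = \hom_\Pi(C_1,C_2)$, $b = \hom_\Pi(C_2,C_1)$ and $e = \ext^1_\Pi(C_1,C_2)$. Since $R^C_\Pi(V)$ is built from a single fixed locally free $T$-module $V$, every point of $C_1$ (resp.\ $C_2$) corresponds to a locally free $\Pi$-module of rank vector $\mathbf{d}$ (resp.\ $\mathbf{e}$), so \eqref{Dynext:formula} applies to every pair and gives
\[\dim\Hom_\Pi(N_1,N_2) + \dim\Hom_\Pi(N_2,N_1) = (\mathbf{d},\mathbf{e}) + \dim\Ext^1_\Pi(N_1,N_2)\]
for all $(N_1,N_2)\in C_1\times C_2$.

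First I would exhibit a pair realising both Hom-minima at once. By Proposition \ref{Dyn:semicont} the sets $U_1 = \{(N_1,N_2)\mid \dim\Hom_\Pi(N_1,N_2) = a\}$ and $U_2 = \{(N_1,N_2)\mid \dim\Hom_\Pi(N_2,N_1) = b\}$ are open in $C_1\times C_2$ (for $U_2$ one composes with the swap isomorphism $C_1\times C_2 \to C_2\times C_1$), and they are nonempty because the defining minima are attained. As $C_1$ and $C_2$ are irreducible, so is $C_1\times C_2$, hence $U_1\cap U_2\neq\varnothing$; choosing $(N_1,N_2)$ in this intersection and substituting into the displayed identity gives $\dim\Ext^1_\Pi(N_1,N_2) = a+b-(\mathbf{d},\mathbf{e})$, which is $\geq e$, so $a+b\geq e+(\mathbf{d},\mathbf{e})$.

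Finally I would read the identity off at the given pair. By definition of $S(C_1,C_2)$ we have $\dim\Ext^1_\Pi(M_1,M_2) = e$, so the displayed identity yields $\dim\Hom_\Pi(M_1,M_2) + \dim\Hom_\Pi(M_2,M_1) = e+(\mathbf{d},\mathbf{e}) \leq a+b$. Since $\dim\Hom_\Pi(M_1,M_2)\geq a$ and $\dim\Hom_\Pi(M_2,M_1)\geq b$ by definition of the generic values, both inequalities must be equalities, which is exactly the assertion. The argument is short; the only points needing care are that every point of a component corresponds to a locally free module of the correct rank vector (so the Ext-formula is available throughout) and the irreducibility of $C_1\times C_2$ used to intersect $U_1$ and $U_2$. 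I do not expect a real obstacle here.
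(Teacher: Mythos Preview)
Your argument is correct. The paper's own proof merely cites Lemma~4.4 of \cite{BasicsModuleVarieties} for one of the two equalities and then invokes the Ext-symmetry \eqref{Dynext:symmetry} to swap the roles of $M_1$ and $M_2$ and obtain the other. Your proof is a self-contained version of the same idea: instead of appealing to Ext-symmetry separately, you feed the Ext-formula \eqref{Dynext:formula} directly into the semicontinuity/irreducibility argument, which lets you handle both Hom-dimensions at once via the single identity $\dim\Hom_\Pi(N_1,N_2)+\dim\Hom_\Pi(N_2,N_1)=(\mathbf d,\mathbf e)+\dim\Ext^1_\Pi(N_1,N_2)$. The ingredients are identical (upper semicontinuity of Hom and Ext, irreducibility of $C_1\times C_2$, and the Ext-formula for locally free modules); your packaging is slightly more economical in that it avoids invoking \eqref{Dynext:symmetry} as a separate step.
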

    \begin{proof}
        This is Lemma 4.4 in \cite{BasicsModuleVarieties} combined with the Ext-symmetry \eqref{Dynext:symmetry}.
    \end{proof}
    Let $C_1 \in \comp(\mathbf{d}_1)$ and $C_2 \in \comp(\mathbf{d}_2)$. Applying Ext-symmetry \eqref{Dynext:symmetry} and the Ext-formula \eqref{Dynext:formula} to an element of $S(C_1,C_2)$ gives the corresponding statements for components:
    \begin{equation}\label{binary:CompExtDual}
        \ext_\Pi(C_1,C_2) = \ext_\Pi(C_2,C_1)
    \end{equation}
    \begin{equation}\label{binary:CompExtEuler}
        \hom_\Pi(C_1,C_2) - \ext^1_\Pi(C_1,C_2) + \hom_\Pi(C_2,C_1) = (\mathbf{d}_1,\mathbf{d}_2).
    \end{equation}
    We will study the extensions of the generic pairs of modules in $S(C_1,C_2)$.
    \begin{Definition}
        Let $V = V_1 \oplus V_2$ be the direct sum of two locally free $T$-modules and let $C_1 \in \comp(V_1)$, $C_2\in \comp(V_2)$ and $S\subseteq C_1 \times C_2$ open. Then the set of generic extensions of $S$ is
        \[\mathcal{E} (S) = \{M \in R_\Pi(V) \mid \exists \  0\rightarrow M_2\rightarrow M \rightarrow M_1 \rightarrow 0 \textnormal{ exact with } (M_1,M_2) \in S\}.\]
        We set $C_1 * C_2 = \overline{\mathcal{E}(S(C_1,C_2))}.$
    \end{Definition}
    The set $S$ will usually be a subset of $S(C_1,C_2)$. The class of $E$-filtered modules is closed under extension so the set $\mathcal{E} (S)$ lies in $R^E_\Pi(V)$. The variety $C_1 * C_2$ turns out to be an irreducible component of $R^C_\Pi(V)$, turning $*$ into a binary operation on $\comp$. This is a direct generalisation of Theorem 3.1 in \cite{lapid1}. We adapt their proof.
    \begin{Theorem}\label{binary:wellDefined}
        In the situation above $C_1 * C_2 \in \comp(V)$.\\
        In addition, for any non-empty open $S \subseteq S(C_1,C_2)$ we have $\overline{\mathcal{E}(S)} = C_1 * C_2$. 
    \end{Theorem}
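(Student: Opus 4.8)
The plan is to exhibit $\mathcal{E}(S)$ as the image of an explicit irreducible incidence variety, read off its dimension, and match it against $\dim R^C_\Pi(V) = \beta(\mathbf{d},\mathbf{d})$; throughout set $\mathbf{d}_i = \rankv(V_i)$ and $\mathbf{d} = \mathbf{d}_1 + \mathbf{d}_2$. In fact I would prove, for \emph{every} non-empty open $S \subseteq S(C_1,C_2)$, that $\overline{\mathcal{E}(S)}$ is an irreducible component of $R^C_\Pi(V)$ of dimension $\beta(\mathbf{d},\mathbf{d})$. The second assertion then follows formally: for two such $S,S'$ the intersection $S \cap S'$ is again non-empty open (since $S(C_1,C_2)$ is open in the irreducible variety $\widehat{C}_1\times\widehat{C}_2$), one has $\mathcal{E}(S\cap S') \subseteq \mathcal{E}(S)\cap\mathcal{E}(S')$, and a component contained in a component coincides with it, so $\overline{\mathcal{E}(S)} = \overline{\mathcal{E}(S\cap S')} = \overline{\mathcal{E}(S')}$; taking $S' = S(C_1,C_2)$ gives $\overline{\mathcal{E}(S)} = C_1 * C_2$ and re-proves the first assertion.

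The first real step, with no counterpart in \cite{lapid1}, is a reduction to a single fixed submodule. Since each $H_i \cong K[X]/(X^{c_i})$ is a principal ideal ring, a free submodule of a free $H_i$-module is automatically a direct summand (Smith normal form), so any two locally free $T$-submodules of $V$ with rank vector $\mathbf{d}_2$ are $GL(V)$-conjugate; hence the set $\mathbf{Gr}$ of all such submodules is the homogeneous space $GL(V)/P$ with $P$ the stabiliser of the fixed summand $V_2$, and \eqref{dimGL}, \eqref{dimHomSpace} give $\dim\mathbf{Gr} = \alpha(\mathbf{d}_1,\mathbf{d}_2)$. As crystal modules are locally free, every inclusion $M_2 \hookrightarrow N$ in the definition of $\mathcal{E}(S)$ has image a point of $\mathbf{Gr}$, so with
\[ \mathcal{Z} = \{(N,W)\in R_\Pi(V)\times\mathbf{Gr} \mid W\text{ is }N\text{-invariant},\ (N/W,\,N|_W)\in S\} \]
and $\pi\colon\mathcal{Z}\to R_\Pi(V)$ the first projection one has $\pi(\mathcal{Z}) = \mathcal{E}(S)$. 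The second projection identifies $\mathcal{Z}$ with the associated bundle $GL(V)\times_P p^{-1}(S)$, where $p^{-1}(S) = \{N\in R_\Pi(V)\mid V_2\text{ is }N\text{-invariant},\ (N/V_2,N|_{V_2})\in S\}$. Its projection to $S$ has fibre over $(M_1,M_2)$ the cocycle space $\Der_\Pi(M_1,M_2)$, cut out of a fixed affine space by the relations of $\Pi$ on the off-diagonal blocks; by \eqref{Dyn:goodcomplex}, \eqref{dimHomSpace}, the lemma that points of $S(C_1,C_2)$ attain the generic values of $\hom$, and the definition of $S$, this dimension is the constant $\alpha(\mathbf{d}_1,\mathbf{d}_2) - \hom_\Pi(C_1,C_2) + \ext^1_\Pi(C_1,C_2)$. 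Hence $p^{-1}(S)\to S$ is a vector bundle, $\mathcal{Z}$ and $\mathcal{E}(S)$ are irreducible, and, using $\dim S = \dim C_1 + \dim C_2 = \beta(\mathbf{d}_1,\mathbf{d}_1)+\beta(\mathbf{d}_2,\mathbf{d}_2)$ (from \eqref{dimRepVar} and Proposition \ref{Dyn:CrystalComps}),
\[ \dim\mathcal{Z} = \beta(\mathbf{d}_1,\mathbf{d}_1)+\beta(\mathbf{d}_2,\mathbf{d}_2) + 2\alpha(\mathbf{d}_1,\mathbf{d}_2) - \hom_\Pi(C_1,C_2) + \ext^1_\Pi(C_1,C_2). \]

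Next I would bound the generic fibre of $\pi$ from above. For $(N,W)\in\mathcal{Z}$ the fibre $\pi^{-1}(N)$ is a locally closed subset of a quiver Grassmannian of $N$, whose tangent space at $W$ is $\Hom_\Pi(W,N/W)$; since $W\cong M_2$ and $N/W\cong M_1$ for a pair of $S$, and points of $S$ attain the generic $\hom$, the local dimension of $\pi^{-1}(N)$ at every $W$ is at most $\dim\Hom_\Pi(M_2,M_1) = \hom_\Pi(C_2,C_1)$. Therefore $\dim\overline{\mathcal{E}(S)} \ge \dim\mathcal{Z} - \hom_\Pi(C_2,C_1)$. Substituting $\dim\mathcal{Z}$, using \eqref{binary:CompExtEuler} to replace $-\hom_\Pi(C_1,C_2)+\ext^1_\Pi(C_1,C_2)-\hom_\Pi(C_2,C_1)$ by $-(\mathbf{d}_1,\mathbf{d}_2)$, and then writing $(\mathbf{d}_1,\mathbf{d}_2) = 2\alpha(\mathbf{d}_1,\mathbf{d}_2)-\beta(\mathbf{d}_1,\mathbf{d}_2)-\beta(\mathbf{d}_2,\mathbf{d}_1)$ and invoking bilinearity of $\beta$, the bound collapses to $\dim\overline{\mathcal{E}(S)} \ge \beta(\mathbf{d},\mathbf{d})$. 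Conversely $\mathcal{E}(S)$ consists of $E$-filtered modules, so $\overline{\mathcal{E}(S)}\subseteq R^E_\Pi(V)$, and by Proposition \ref{Dyn:CrystalComps} every component of $R^E_\Pi(V)$ has dimension at most $\dim R^C_\Pi(V) = \beta(\mathbf{d},\mathbf{d})$. Hence $\dim\overline{\mathcal{E}(S)} = \beta(\mathbf{d},\mathbf{d})$, and an irreducible subset of $R^E_\Pi(V)$ of this maximal dimension must be one of its top-dimensional components, hence lie in and be a component of $R^C_\Pi(V)$; that is, $\overline{\mathcal{E}(S)}\in\comp(V)$.

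The hard part is making the various generic dimensions propagate correctly through $\mathcal{Z}$ and cancel exactly against $\beta(\mathbf{d},\mathbf{d})$ in the last line; the genuinely new ingredient over the symmetric case is the structural fact that $\mathbf{Gr}$ is a single $GL(V)$-orbit, and a routine but necessary check is that the relations of $\Pi$ impose a linear system of constant rank along $S$, so that $p^{-1}(S)\to S$ is an honest vector bundle.
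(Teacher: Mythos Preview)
Your argument is correct and lands on the same dimension inequality $\dim\overline{\mathcal{E}(S)}\ge\beta(\mathbf{d},\mathbf{d})$ via the same Ext-formula bookkeeping as the paper, but the packaging is different. The paper never introduces the Grassmannian $\mathbf{Gr}$: it works directly with the derivation bundle $Z\to S$ (your $p^{-1}(S)$), notes that $\mathcal{E}(S)=GL(V).Z$ is irreducible simply because every extension is isomorphic to one given by a derivation, and then --- instead of bounding \emph{all} fibres of an incidence projection by a tangent-space argument --- considers the action map $R\times Z\to\mathcal{E}(S)$ with $R=\Hom_T(V_2,V_1)\hookrightarrow GL(V)$ and computes a \emph{single} fibre exactly, namely the fibre over the split extension $M_1\oplus M_2$, which is $\Hom_\Pi(M_2,M_1)\times\{M\}$. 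Since $\dim R=\alpha(\mathbf{d}_2,\mathbf{d}_1)=\dim\mathbf{Gr}$, the two dimension counts match term for term. What your route gains is uniformity (the fibre bound holds at every point, via the $\Pi$-Grassmannian tangent space); what the paper's route gains is that the transitivity of $GL(V)$ on $\mathbf{Gr}$ --- the structural fact you flag as the ``genuinely new ingredient'' --- is never invoked, so the proof from \cite{lapid1} transplants verbatim to the symmetrizable setting without that extra input. One small wrinkle in your write-up: for an arbitrary open $S\subseteq S(C_1,C_2)$ the parabolic $P$ need not preserve $p^{-1}(S)$, so the associated-bundle description of $\mathcal{Z}$ tacitly assumes $S$ is $GL(V_1)\times GL(V_2)$-stable; this is harmless since $\mathcal{E}(S)$ depends only on the saturation of $S$, but it should be said.
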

    \begin{proof}
          First, we need to translate the construction of $C_1 * C_2$ into algebraic geometry. We set $\rankv V_1 = \mathbf{d}_1$ and $\rankv V_2 = \mathbf{d}_2$ and consider the variety of derivations over $S$:
         \[Z = \{(M_1,M_2,d) \mid (M_1,M_2)\in S, d \in \Der_\Pi(M_1,M_2)\}.\]
         We can consider $Z$ as a subvariety of $R^E_\Pi(V)$ by identifying $(M_1,M_2,d)$ with the module defined by $d$.
         We apply the exact sequence \eqref{Dyn:goodcomplex} to a pair $(M_1,M_2)\in S(C_1,C_2)$:
         \begin{equation}\label{binary:DerivationSequence}
             0 \rightarrow \Hom_\Pi(M_1,M_2) \rightarrow \Hom_T(M_1,M_2)\rightarrow \Der_\Pi(M_1,M_2) \rightarrow \Ext^1_\Pi(M_1,M_2) \rightarrow 0.
         \end{equation}
         The dimensions of all but the third term are constant on $S$. Then the dimension of $\Der_\Pi(M_1,M_2)$ must also be constant on $S$. This implies that $Z$ is a vector bundle over $S$, in particular $Z$ is irreducible, too. Every extension between two modules is isomorphic to a module defined by a derivation. This shows $\mathcal{E}(S) = GL(V).Z$. Using that $GL(V)$ is irreducible, we get that $\overline{\mathcal{E}(S)}$ is irreducible. It remains to compute its dimension. We identify $\Hom_T(V_2,V_1)$ with a subgroup of $GL(V)$ via $L \mapsto \left(\begin{smallmatrix}
                                    1 & L \\
                                    0 & 1
                                \end{smallmatrix}\right)$
        and call this subgroup $R$. We consider the action map 
        \[ F:R \times Z \rightarrow C_1 * C_2.\] Its image is a subset of $\mathcal{E}(S)$. In order to show that this image has a high dimension we need to exhibit a small fibre. Let us fix $M = (M_1,M_2,0)\in Z$. It corresponds to the direct sum $M_1 \oplus M_2 \in R^E_\Pi(V)$. We need to show that the fibre $(R \times Z) _M$ is sufficiently small. In fact it is given by $\Hom_\Pi(M_2,M_1)\times\{M\}$: Writing out $L.M = N$ as block matrices, the lower left corner and diagonal show $M = N$ while the top right corner shows $L \in \Hom_\Pi(M_2,M_1)$. Turning the previous discussion into formulas for the dimensions yields:
        \begin{Claim}
            \begin{enumerate}[label=(\roman*)]
                \item $\dim  S = \beta(\mathbf{d}_1,\mathbf{d}_1) + \beta(\mathbf{d}_2,\mathbf{d}_2)$,
                \item $\dim Z =  \alpha (\mathbf{d}_1,\mathbf{d}_2) + \ext^1_\Pi(C_1,C_2) -\hom_\Pi(C_1,C_2) + \dim S$,
                \item $\dim R = \alpha (\mathbf{d}_2,\mathbf{d}_1)$,
                \item $\dim R \times Z = \hom_\Pi(C_2,C_1) + \beta (\mathbf{d}_1+\mathbf{d}_2,\mathbf{d}_1+\mathbf{d}_2)$,
                \item $\dim (R\times Z)_M = \hom_\Pi(C_2,C_1)$,
                \item $\dim \mathcal{E}(S) \geq \dim R\times Z - \dim (R\times Z)_M = \beta (\mathbf{d}_1+\mathbf{d}_2,\mathbf{d}_1+\mathbf{d}_2)$.
            \end{enumerate}
        \end{Claim}
        \begin{proof}\renewcommand{\qedsymbol}{}
            \begin{enumerate}[label=(\roman*)]
                \item This holds because $S$ is an open subset of $C_1 \times C_2$ and $\dim C_i = \beta (\mathbf{d}_i,\mathbf{d}_i)$ by \eqref{dimRepVar}.
                \item The exact sequence \eqref{binary:DerivationSequence} gives the rank of $Z$ as a vector bundle over $S$: \[\rank_S Z =\dim \Hom_T(V_1,V_2) + \ext^1_\Pi(C_1,C_2) -\hom_\Pi(C_1,C_2).\] We know $ \dim \Hom_T(V_1,V_2) = \alpha(\mathbf{d}_1,\mathbf{d}_2)$ from \eqref{dimHomSpace}. Now the claim follows from the fact that the dimension of a vector bundle is the dimension of its base plus its rank.
                \item We defined $R$ by embedding $\Hom_T(V_2,V_1)$ into $GL(V)$. Its dimension is given by $\dim \Hom_T(V_2,V_1) = \alpha (\mathbf{d}_2,\mathbf{d}_1)$, see \eqref{dimHomSpace}.
                \item We compute
                    \begin{align*}
                        \dim R\times Z &= \dim R + \dim Z\\
                        &= \alpha (\mathbf{d}_2,\mathbf{d}_1) + \alpha (\mathbf{d}_1,\mathbf{d}_2) + \ext^1_\Pi(C_1,C_2) -\hom_\Pi(C_1,C_2) + \dim S\\
                        &= \hom_\Pi(C_2,C_1) + \beta(\mathbf{d}_1,\mathbf{d}_2) + \beta(\mathbf{d}_1,\mathbf{d}_2) + \dim S\\
                        &= \hom_\Pi(C_2,C_1) + \beta(\mathbf{d}_1,\mathbf{d}_2) + \beta(\mathbf{d}_2,\mathbf{d}_1) + \beta(\mathbf{d}_1,\mathbf{d}_1) + \beta(\mathbf{d}_2,\mathbf{d}_2)\\
                        &= \hom_\Pi(C_2,C_1) + \beta (\mathbf{d}_1+\mathbf{d}_2,\mathbf{d}_1+\mathbf{d}_2)
                    \end{align*}
                    where the second equality follows from (ii) and (iii), the third from the Ext-formula for components \eqref{binary:CompExtEuler}, the fourth from (i) and the last from bilinearity of $\beta$.
                \item We identified $(R\times Z)_M$ as $\Hom_\Pi(M_2,M_1)\times\{M\}$. Since $(M_1,M_2)\in S(C_1,C_2)$ the claim holds.
                \item We have
                \[\dim \Image (F) \geq \dim R \times Z - \dim (R \times Z)_M = \beta (\mathbf{d}_1+\mathbf{d}_2,\mathbf{d}_1+\mathbf{d}_2)\]
                by (iv) and (v). Since $\Image(F) \subseteq \mathcal{E}(S)$
                this bound also holds for $\mathcal{E}(S)$.
            \end{enumerate}
        \end{proof}
        In conclusion, we get that $\overline{\mathcal{E}(S)}$ is an irreducible subset of dimension
        \[\dim \mathcal{E}(S) \geq  \beta (\mathbf{d}_1+\mathbf{d}_2,\mathbf{d}_1+\mathbf{d}_2) = \dim R^C_\Pi(V),\]
        hence an irreducible component of maximal dimension. Applying this to $S$ and to $S(C_1,C_2)$ and using that $\mathcal{E}(S)\subseteq\mathcal{E}(S(C_1,C_2))$ gives that
        \[\overline{\mathcal{E}(S)} = \overline{\mathcal{E}(S(C_1,C_2))} = C_1 *C_2\]
        is an irreducible component of maximal dimension.
    \end{proof}
    \begin{Corollary}
        The construction $*$ defines a binary operation on $\comp$.
    \end{Corollary}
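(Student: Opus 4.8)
The plan is to observe that essentially all the substance is already contained in Theorem \ref{binary:wellDefined}, and that what remains is only the bookkeeping of the choices that enter the definition of $C_1 * C_2$. First I would recall precisely how the operation is set up on abstract components: given $C_1 \in \comp(\mathbf{d}_1)$ and $C_2 \in \comp(\mathbf{d}_2)$ one picks locally free $T$-modules $V_1, V_2$ with $\rankv V_i = \mathbf{d}_i$, transports each $C_i$ to $\comp(V_i)$ along the canonical bijection, puts $V = V_1 \oplus V_2$, and forms $\overline{\mathcal{E}(S(C_1,C_2))} \subseteq R^C_\Pi(V)$. By Theorem \ref{binary:wellDefined} this closure is an irreducible component of $R^C_\Pi(V)$, i.e. an element of $\comp(V)$; since $\rankv V = \mathbf{d}_1 + \mathbf{d}_2$, the canonical bijection $\comp(V) \cong \comp(\mathbf{d}_1 + \mathbf{d}_2)$ then produces a well-defined element of $\comp$.

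Next I would check independence of the auxiliary choices. The set $S(C_1,C_2)$ depends only on $C_1$ and $C_2$, and the second assertion of Theorem \ref{binary:wellDefined} shows that replacing it by any non-empty open subset does not change the closure, so there is nothing to verify on that side. The only genuine choice is that of the representatives $V_1, V_2$, and here I would use that a locally free $T$-module is determined up to isomorphism by its rank vector. Given two choices $V_i, V_i'$ there are $T$-isomorphisms $\phi_i \colon V_i \to V_i'$, and $\phi_1 \oplus \phi_2 \colon V \to V'$ induces an isomorphism of affine varieties $R_\Pi(V) \to R_\Pi(V')$ that intertwines the $GL$-actions, preserves the loci of $E$-filtered and of crystal modules, and carries short exact sequences to short exact sequences. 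Hence it matches $\mathcal{E}(S(C_1,C_2))$ with the analogous set built from $V_1', V_2'$ and restricts to the canonical bijection $\comp(V) \cong \comp(V')$, so the resulting element of $\comp$ is the same.

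Putting the two observations together shows that $(C_1,C_2) \mapsto C_1 * C_2$ is a well-defined map $\comp \times \comp \to \comp$, which upon restriction to fixed rank vectors lands in $\comp(\mathbf{d}_1 + \mathbf{d}_2)$ as already recorded in Theorem \ref{binary:wellDefined}. I do not expect a real obstacle here; the single point deserving a word of care is the naturality of the entire construction under $T$-module isomorphisms, which is what legitimizes the passage between $\comp(V)$ and $\comp(\mathbf{d})$, and this is immediate from the functoriality of all the objects involved.
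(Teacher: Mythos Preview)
Your proposal is correct and follows the same route as the paper, which in fact gives no explicit proof at all and treats the corollary as immediate from Theorem~\ref{binary:wellDefined}. Your careful discussion of independence from the choice of $V_1,V_2$ via the canonical bijection $\comp(V)\cong\comp(\mathbf{d})$ makes explicit what the paper leaves implicit in the definition of $\comp(\mathbf{d})$, but there is no substantive difference in approach.
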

\subsection{Computation of \texorpdfstring{$*$}{*}}
    To simplify computing this operation we need a way to identify components. First we need to compute the dimensions of orbits.
    \begin{Proposition}
        Let $M \in R^C_\Pi(V)$ be a module. Then \[\codim \overline{\mathcal{O}_M} = \frac{1}{2}\dim \Ext^1_\Pi(M,M).\]
    \end{Proposition}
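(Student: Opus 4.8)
The plan is a direct orbit--stabiliser count combined with the Ext-formula \eqref{Dynext:formula}. Write $\mathbf{d} = \rankv(V)$. Since $R^C_\Pi(V)$ is closed and $GL(V)$-stable and contains $M$, the orbit closure $\overline{\mathcal{O}_M}$ lies inside $R^C_\Pi(V)$; by Proposition \ref{Dyn:CrystalComps} the latter is equidimensional, and by \eqref{dimRepVar} each of its irreducible components has dimension $\beta(\mathbf{d},\mathbf{d})$. Hence the codimension of $\overline{\mathcal{O}_M}$ is well defined independently of which component is used to measure it, and equals $\beta(\mathbf{d},\mathbf{d}) - \dim \mathcal{O}_M$.

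Next I would compute $\dim \mathcal{O}_M$ via orbit--stabiliser. The stabiliser of $M$ in $GL(V)$ consists exactly of the $T$-linear automorphisms of $V$ that commute with the $\Pi$-action, i.e. $\mathrm{Stab}_{GL(V)}(M) = \Aut_\Pi(M)$. This is the group of units of the finite-dimensional algebra $\End_\Pi(M)$, hence a non-empty Zariski-open subset of it, so $\dim \Aut_\Pi(M) = \dim \Hom_\Pi(M,M)$. Combined with \eqref{dimGL} this gives $\dim \mathcal{O}_M = \alpha(\mathbf{d},\mathbf{d}) - \dim \Hom_\Pi(M,M)$, and therefore $\codim \overline{\mathcal{O}_M} = \beta(\mathbf{d},\mathbf{d}) - \alpha(\mathbf{d},\mathbf{d}) + \dim \Hom_\Pi(M,M)$.

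Finally I would feed the Ext-formula \eqref{Dynext:formula} with $N = M$ into this expression: it reads $2\dim \Hom_\Pi(M,M) - \dim \Ext^1_\Pi(M,M) = (\mathbf{d},\mathbf{d})$, while by the definition of the symmetrised Euler form $(\mathbf{d},\mathbf{d}) = 2\alpha(\mathbf{d},\mathbf{d}) - 2\beta(\mathbf{d},\mathbf{d})$. Hence $\dim \Hom_\Pi(M,M) = \alpha(\mathbf{d},\mathbf{d}) - \beta(\mathbf{d},\mathbf{d}) + \tfrac12 \dim \Ext^1_\Pi(M,M)$, and substituting this into the formula for $\codim \overline{\mathcal{O}_M}$ makes the $\alpha$- and $\beta$-terms cancel, leaving $\tfrac12 \dim \Ext^1_\Pi(M,M)$, as claimed.

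The computation itself is routine bookkeeping, so the only points that need care are, first, the ambient-space issue: one must confirm $\overline{\mathcal{O}_M} \subseteq R^C_\Pi(V)$ and that $R^C_\Pi(V)$ is equidimensional of dimension $\beta(\mathbf{d},\mathbf{d})$, so that ``codimension'' is unambiguous even though $M$ may lie on several components, which is precisely where Proposition \ref{Dyn:CrystalComps} and \eqref{dimRepVar} enter; and second, the identification $\mathrm{Stab}_{GL(V)}(M) = \Aut_\Pi(M)$ together with the fact that passing to the unit group of $\End_\Pi(M)$ does not change dimension.
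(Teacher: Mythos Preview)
Your proof is correct and follows essentially the same route as the paper: an orbit--stabiliser computation giving $\codim \overline{\mathcal{O}_M} = \beta(\mathbf{d},\mathbf{d}) - \alpha(\mathbf{d},\mathbf{d}) + \dim \Hom_\Pi(M,M)$, followed by the Ext-formula \eqref{Dynext:formula} with $N=M$ to rewrite $\dim \Hom_\Pi(M,M)$ and cancel the $\alpha$- and $\beta$-terms. You are in fact slightly more explicit than the paper about why the codimension is unambiguous (equidimensionality via Proposition \ref{Dyn:CrystalComps}) and why $\dim \Aut_\Pi(M) = \dim \End_\Pi(M)$.
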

    \begin{proof}
        Let $\mathbf{d} = \rankv M$. The orbit $\mathcal{O}_M$ is irreducible since $GL(V)$ is. To compute its dimension we need to study the stabilizer at $M$ which is $\Aut_\Pi(M)$. We have
        \[ \dim \Aut_\Pi(M) = \dim \Hom_\Pi(M,M) = \frac{1}{2}(\mathbf{d},\mathbf{d}) + \frac{1}{2}\dim \Ext^1_\Pi(M,M)\]
        where we use the Ext-formula \eqref{binary:CompExtEuler} for the second equality. Now 
        \begin{align*}
            \codim \overline{\mathcal{O}_M} &= \dim R^C_\Pi(V) -\overline{\mathcal{O}_M}\\
            &= \dim R^C_\Pi(V) -(\dim GL(V)- \dim \Aut_\Pi(M))\\ 
            &= \underbrace{\beta(\mathbf{d},\mathbf{d}) - \alpha (\mathbf{d},\mathbf{d}) + \frac{1}{2}(\mathbf{d},\mathbf{d})}_{=0}+ \frac{1}{2}\dim \Ext^1_\Pi(M,M)
        \end{align*}
        using \eqref{dimRepVar} and \eqref{dimGL}.
    \end{proof}
    A module $M$ is called rigid, if $\Ext^1_\Pi(M,M) = 0$.
    \begin{Corollary}
        Let $M$ be a rigid crystal module. Then $\mathcal{O}_M$ is open and $\langle M \rangle := \overline{\mathcal{O}_M}$ is an irreducible component. We call irreducible components of this form rigid. 
    \end{Corollary}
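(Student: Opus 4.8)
The plan is to deduce this almost immediately from the codimension formula in the preceding Proposition together with the equidimensionality of $R^C_\Pi(V)$ from Proposition \ref{Dyn:CrystalComps}. First I would record two preliminary observations. Since $M$ is a crystal module, every element of the orbit $\mathcal{O}_M$ is isomorphic to $M$ and hence a crystal module, so $\mathcal{O}_M$ lies in $R^C_\Pi(V)$ and therefore so does its closure $\overline{\mathcal{O}_M}$. Moreover $\mathcal{O}_M$ is the image of $GL(V)$ under the orbit map, so it is irreducible, and then $\overline{\mathcal{O}_M}$ is an irreducible closed subset of $R^C_\Pi(V)$.

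Next I would apply the Proposition: as $M$ is rigid we have $\Ext^1_\Pi(M,M)=0$, so $\codim_{R^C_\Pi(V)}\overline{\mathcal{O}_M}=0$, i.e.\ $\dim\overline{\mathcal{O}_M}=\dim R^C_\Pi(V)$. Any irreducible closed subset of $R^C_\Pi(V)$ is contained in some irreducible component $Z$, and by Proposition \ref{Dyn:CrystalComps} every component satisfies $\dim Z=\dim R^C_\Pi(V)=\dim\overline{\mathcal{O}_M}$; since $\overline{\mathcal{O}_M}\subseteq Z$ are both irreducible of the same finite dimension, $\overline{\mathcal{O}_M}=Z$. Hence $\langle M\rangle:=\overline{\mathcal{O}_M}$ is an irreducible component of $R^C_\Pi(V)$, that is, $\langle M\rangle\in\comp(V)$.

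For the openness of $\mathcal{O}_M$ I would first use the general fact that an orbit of an algebraic group action is locally closed, so $\mathcal{O}_M$ is open in its closure $\langle M\rangle$. To upgrade this to openness inside all of $R^C_\Pi(V)$, I would observe that $GL(V)$ is connected, hence every irreducible component of $R^C_\Pi(V)$ is $GL(V)$-stable; consequently, if $\mathcal{O}_M$ met a component $Z'\neq\langle M\rangle$, then $Z'$ being closed and $GL(V)$-stable would force $\overline{\mathcal{O}_M}\subseteq Z'$, contradicting $\overline{\mathcal{O}_M}=\langle M\rangle$. Therefore $\mathcal{O}_M$ is disjoint from every component other than $\langle M\rangle$, so it is contained in the open subset $U=R^C_\Pi(V)\setminus\bigcup_{Z'\neq\langle M\rangle}Z'$, on which it is open by the previous sentence; hence $\mathcal{O}_M$ is open in $R^C_\Pi(V)$.

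The only non-formal point, and the step I expect to need a word of justification, is the connectedness of $GL(V)$ used above: since $H_i=K[X]/(X^{c_i})$, the group $\Aut_{H_i}(V_i)$ is the unit group of a matrix algebra over $H_i$, which is an iterated extension of a general linear group over $K$ by unipotent groups and hence connected over the algebraically closed field $K$; a product of such groups remains connected. (If one only wants $\mathcal{O}_M$ open inside $\langle M\rangle$, this remark is unnecessary and the corollary is purely formal.)
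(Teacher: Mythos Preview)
The paper gives no proof for this Corollary; it is stated immediately after the codimension formula and treated as a direct consequence. Your argument is exactly the intended one: rigidity forces $\codim\overline{\mathcal{O}_M}=0$, and equidimensionality of $R^C_\Pi(V)$ (Proposition~\ref{Dyn:CrystalComps}) then makes $\overline{\mathcal{O}_M}$ a component. Your extra care in upgrading ``open in $\overline{\mathcal{O}_M}$'' to ``open in $R^C_\Pi(V)$'' via connectedness of $GL(V)$ and $GL(V)$-stability of components is a genuine detail the paper leaves implicit, and your justification of the connectedness of $\Aut_{H_i}(V_i)$ is correct.
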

    The next lemma gives a way to compute $*$ for rigid components.
    \begin{Lemma}
        We consider an exact sequence of rigid crystal $\Pi$-modules
        \[0 \rightarrow M_2 \rightarrow M \rightarrow M_1\rightarrow 0.\]
        Then $\langle M_1\rangle * \langle M_2\rangle = \langle M\rangle$.
    \end{Lemma}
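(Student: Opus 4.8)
The plan is to write $\langle M_i\rangle = \overline{\mathcal O_{M_i}}$ for $i=1,2$ and to prove the identity in three steps: first that $(M_1,M_2)$ is a generic pair in the sense of $S(\langle M_1\rangle,\langle M_2\rangle)$, then that this forces $\langle M\rangle\subseteq\langle M_1\rangle * \langle M_2\rangle$, and finally that both sides are irreducible components of the relevant crystal variety, hence equal.

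For the setup, fix a locally free $T$-module $V = V_1\oplus V_2$ with $\rankv V_i = \rankv M_i$. Since $M_1,M_2$ and $M$ are locally free, the given short exact sequence gives $\rankv M = \rankv M_1 + \rankv M_2$, so $M$ may be viewed as a point of $R_\Pi(V)$, and it is a crystal module by hypothesis. Put $C_i = \langle M_i\rangle\in\comp(V_i)$. Because each $M_i$ is a rigid crystal module, the orbit $\mathcal O_{M_i}$ is open in $C_i$, and since all of its points are isomorphic to $M_i$ it consists of crystal modules; hence $\mathcal O_{M_i}\subseteq\widehat C_i$.

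The first step: by Proposition \ref{Dyn:semicont} the function $(N_1,N_2)\mapsto\dim\Ext^1_\Pi(N_1,N_2)$ is upper semicontinuous on $C_1\times C_2$, so the set where it attains its minimum value $\ext^1_\Pi(C_1,C_2)$ is a nonempty open, hence dense, subset of the irreducible variety $C_1\times C_2$. It therefore meets the dense open subset $\mathcal O_{M_1}\times\mathcal O_{M_2}$, and since $\dim\Ext^1_\Pi$ is constant on isomorphism classes this yields $\dim\Ext^1_\Pi(M_1,M_2) = \ext^1_\Pi(C_1,C_2)$, i.e. $(M_1,M_2)\in S(C_1,C_2)$. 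The remaining steps are then formal: the given sequence exhibits $M$ as an element of $\mathcal E(S(C_1,C_2))$, so $\langle M\rangle = \overline{\mathcal O_M}\subseteq\overline{\mathcal E(S(C_1,C_2))} = C_1 * C_2$; and since $M$ is a rigid crystal module, $\langle M\rangle$ is an irreducible component of $R^C_\Pi(V)$, while $C_1 * C_2$ is one by Theorem \ref{binary:wellDefined}. An irreducible component contained in another one coincides with it, so $\langle M\rangle = C_1 * C_2 = \langle M_1\rangle * \langle M_2\rangle$.

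The only real content is the first step — checking that a pair of rigid crystal modules is generic; the argument above reduces it to semicontinuity of $\dim\Ext^1_\Pi$ together with density of the orbits $\mathcal O_{M_i}$ inside the components $C_i$. A minor point worth recording explicitly is the additivity of rank vectors along the sequence, which is what guarantees that $M$ genuinely lives on $V = V_1\oplus V_2$.
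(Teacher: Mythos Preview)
Your proof is correct and follows essentially the same approach as the paper. The paper's argument is a terse version of yours: it notes that the nonempty open subsets $S(\langle M_1\rangle,\langle M_2\rangle)$ and $\mathcal O_{M_1}\times\mathcal O_{M_2}$ of the irreducible variety $\langle M_1\rangle\times\langle M_2\rangle$ must intersect, whence $M\in\mathcal E(S(\langle M_1\rangle,\langle M_2\rangle))$ and the conclusion follows; you unpack the same reasoning, making explicit the role of semicontinuity, the inclusion $\mathcal O_{M_i}\subseteq\widehat C_i$, and the final comparison of irreducible components.
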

    \begin{proof}
        The open subsets $S(\langle M_1\rangle,\langle M_2\rangle)$ and $\mathcal{O}_{M_1} \times \mathcal{O}_{M_2}$ of $\langle M_1\rangle\times\langle M_2\rangle$ intersect because $\langle M_1\rangle\times\langle M_2\rangle$ is irreducible. Then $M$ must be in $\mathcal{E}(S(\langle M_1\rangle,\langle M_2\rangle)) \subseteq \langle M_1\rangle * \langle M_2\rangle$. The lemma follows. 
    \end{proof}
    The binary operation is neither commutative nor associative, even in the simplest cases, as the next example shows.
    \begin{Example}[$A_2$]
        We consider the case of $A_2$ with minimal symmetrizer. Let $S_1$ and $S_2$ be the simple modules. Then the previous lemma shows:
        \[ (\langle S_1\rangle *\langle S_2\rangle)*\langle S_1\rangle = \left\langle\begin{pmatrix} S_1 \\ S_2 \end{pmatrix}\right\rangle * \langle S_1\rangle = \left\langle  \begin{pmatrix} S_1 \\ S_2 \end{pmatrix} \oplus S_1\right\rangle  \]
        \[ \langle S_1\rangle *(\langle S_2\rangle *\langle S_1\rangle) = \langle S_1\rangle * \left\langle\begin{pmatrix} S_2 \\ S_1 \end{pmatrix}\right\rangle  = \left\langle  \begin{pmatrix} S_2 \\ S_1 \end{pmatrix} \oplus S_1\right\rangle.  \]
    \end{Example}
    \begin{Example}[$B_2$]
        Let $C$ be of type $B_2$ and $D$ be minimal. In this case, all components are rigid. The following table shows $\langle M \rangle * \langle N \rangle$ for each pair of non-projective indecomposable rigid $\Pi$-modules. A cell marked with $\oplus$ corresponds to the case $\langle M \rangle * \langle N \rangle = \langle M\oplus N \rangle$.
        \[\arraycolsep=3pt\renewcommand{\arraystretch}{2}\begin{array}{c||c|c|c|c|c|c}
            \mfaktor{M}{N} & \begin{smallmatrix}1\\1\end{smallmatrix} & 2 & \begin{smallmatrix}1\\1\\2\end{smallmatrix} & \begin{smallmatrix}2\\1\\1\end{smallmatrix} & \begin{smallmatrix}2&&\\1&&2\\&1&\end{smallmatrix} & \begin{smallmatrix}&1&\\2&&1\\&&2\end{smallmatrix}\\
            \hline \hline  \begin{smallmatrix}1\\1\end{smallmatrix}&\oplus&\begin{smallmatrix}1\\1\\2\end{smallmatrix}&\oplus&\oplus&\begin{smallmatrix}&1&\\2&&1\\1&&2\\&1&\end{smallmatrix}&\begin{smallmatrix}1\\1\\2\end{smallmatrix}\oplus \begin{smallmatrix}1\\1\\2\end{smallmatrix}\\
             \hline             2&\begin{smallmatrix}2\\1\\1\end{smallmatrix}&\oplus&\begin{smallmatrix}2\\1\\1\\2\end{smallmatrix}&\begin{smallmatrix}2&&\\1&&2\\&1&\end{smallmatrix}&\oplus&\oplus\\
             \hline             \begin{smallmatrix}1\\1\\2\end{smallmatrix}&\oplus&\begin{smallmatrix}&1&\\2&&1\\&&2\end{smallmatrix}&\oplus&\begin{smallmatrix}&1&\\2&&1\\1&&2\\&1&\end{smallmatrix}&\begin{smallmatrix}&1&\\2&&1\\1&&2\\&1&\end{smallmatrix}\oplus2&\oplus\\
             \hline             \begin{smallmatrix}2\\1\\1\end{smallmatrix}&\oplus&\begin{smallmatrix}2\\1\\1\\2\end{smallmatrix}&\begin{smallmatrix}1\\1\end{smallmatrix}\oplus\begin{smallmatrix}2\\1\\1\\2\end{smallmatrix}&\oplus&\oplus&\begin{smallmatrix}2\\1\\1\\2\end{smallmatrix}\oplus\begin{smallmatrix}1\\1\\2\end{smallmatrix}\\
             \hline             \begin{smallmatrix}2&&\\1&&2\\&1&\end{smallmatrix}&\begin{smallmatrix}2\\1\\1\end{smallmatrix}\oplus\begin{smallmatrix}2\\1\\1\end{smallmatrix}&\oplus&\begin{smallmatrix}2\\1\\1\end{smallmatrix}\oplus\begin{smallmatrix}2\\1\\1\\2\end{smallmatrix}&\oplus&\oplus&\begin{smallmatrix}2\\1\\1\\2\end{smallmatrix}\oplus\begin{smallmatrix}2\\1\\1\\2\end{smallmatrix}\\
             \hline             \begin{smallmatrix}&1&\\2&&1\\&&2\end{smallmatrix}&\begin{smallmatrix}&1&\\2&&1\\1&&2\\&1&\end{smallmatrix}&\oplus&\oplus&\begin{smallmatrix}&1&\\2&&1\\1&&2\\&1&\end{smallmatrix}\oplus2&\begin{smallmatrix}&1&\\2&&1\\1&&2\\&1&\end{smallmatrix}\oplus2\oplus2&\oplus\\
        \end{array}\]
    \end{Example}
    \subsection{Cancellation of components}
    The binary operation is defined using generic extensions. Similar constructions exist for the other two modules in a short exact sequence: kernels and cokernels.
    \begin{Definition}
        Let $V = V_1 \oplus V_2$ be the direct sum of two locally free $T$-modules and let $C_1 \in \comp(V_1)$, $C\in \comp(V)$ and $S\subseteq C_1 \times C$ open. Then the set of generic kernels of $S$ is
        \[\mathcal{K} (S) = \{M_2 \in R_\Pi(V_2) \mid \exists \  0\rightarrow M_2\rightarrow M \rightarrow M_1 \rightarrow 0 \textnormal{ exact with } (M_1,M) \in S\}.\]
        We set $\mfaktor{C_1}{C} = \overline{\mathcal{K}(S(C_1,C))}$ if this is an element of $\comp$.\\ \\
        Dually, let $C_2 \in \comp(V_2)$ and $S\subseteq C \times C_2$ open. Then the set of generic cokernels of $S$ is
        \[\mathcal{Q} (S) = \{M_1 \in R_\Pi(V_1) \mid \exists \  0\rightarrow M_2\rightarrow M \rightarrow M_1 \rightarrow 0 \textnormal{ exact with } (M,M_2) \in S\}.\]
        We set $\faktor{C}{C_2} = \overline{\mathcal{Q}(S(C,C_2))}$ if this is an element of $\comp$.
    \end{Definition}
     Proposition \ref{Dyn:crystalKernel} implies that $\mathcal{K}(S(C_1,C))\subset R^E_\Pi(V_2)$ and Proposition \ref{Dyn:crystalCokernel} implies that $\mathcal{Q}(S(C,C_2)) \subset R^E_\Pi(V_1)$. We  will focus on the case of cokernels while only stating the dual results for kernels. These operations are a kind of inverse to $*$ as seen in the following proposition. Its proof is based on the proof of the closely related Proposition 5.1 in \cite{lapid1}.    \begin{Proposition}\label{binary:cancellation}
         Let $V = V_1 \oplus V_2$ be the direct sum of two locally free $T$-modules and let $C_1 \in \comp(V_1)$, $C_2\in \comp(V_2)$ be components. Assume that $C_2$ is rigid. Then
         \[\faktor{(C_1 * C_2)}{C_2} = C_1.\]
    \end{Proposition}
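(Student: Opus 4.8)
The plan is to prove directly that $D:=\overline{\mathcal Q(S(C,C_2))}=C_1$, where $C:=C_1*C_2$, by establishing (A) $C_1\subseteq D$, and (B) that $D$ is irreducible with $\dim D\le\beta(\mathbf d_1,\mathbf d_1)$. Since $C_1$ is an irreducible component of $R^E_\Pi(V_1)$ of the maximal dimension $\beta(\mathbf d_1,\mathbf d_1)$ (Proposition~\ref{Dyn:CrystalComps} and \eqref{dimRepVar}), (A) together with (B) and irreducibility of $D$ forces $D=C_1$, which in particular shows $D\in\comp$. Throughout write $\mathbf d_i=\rankv V_i$ and $\mathbf d=\mathbf d_1+\mathbf d_2$, and fix a rigid crystal module $N$ with $C_2=\langle N\rangle$. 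Rigidity enters through three consequences: $\Ext^1_\Pi(N,N)=0$; $\mathcal O_N$ is open and dense in $C_2$, so the generic element of $C_2$ is $N$ (in particular $N\in\widehat{C_2}$); and $\dim\Hom_\Pi(N,N)=\tfrac12(\mathbf d_2,\mathbf d_2)$.

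\textbf{Part (A).} I would reuse the variety of derivations from the proof of Theorem~\ref{binary:wellDefined}. Put $S:=S(C_1,C_2)\cap(C_1\times\mathcal O_N)$, a non-empty open subset of $S(C_1,C_2)$, and let $Z=\{(M_1,M_2,d)\}$ be the associated irreducible variety; as in that proof $Z\subseteq C$, $GL(V).Z=\mathcal E(S)$ is dense in $C$, and each $z\in Z$ gives an exact sequence $0\to M_2\to M(z)\to M_1\to 0$ with $M_2\cong N$ and $M_1\in C_1$. I claim $(M(z),M_2)\in S(C,C_2)$ for $z$ in a non-empty open $Z^\circ\subseteq Z$. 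The three defining conditions: $M(z)\in\widehat C$ is an open condition on $Z$ (using $Z\subseteq C$) satisfied somewhere because $\mathcal E(S)\cap\widehat C$ is dense in $C$ and $\widehat C$ is $GL(V)$-stable; $M_2\cong N\in\widehat{C_2}$ holds automatically; and $\dim\Ext^1_\Pi(M(z),M_2)=\ext^1_\Pi(C,C_2)$ holds on a non-empty open subset by upper semicontinuity (Proposition~\ref{Dyn:semicont}), once one observes that the minimum over $Z$ of $z\mapsto\dim\Ext^1_\Pi(M(z),M_2)$ equals $\ext^1_\Pi(C,C_2)$ — the open dense locus in $C\times C_2$ where $\dim\Ext^1_\Pi$ is minimal meets the dense set $\mathcal E(S)\times\mathcal O_N$, and $\Ext^1_\Pi$ depends only on isomorphism classes. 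For $z\in Z^\circ$ the cokernel $M_1$ then lies in $\mathcal Q(S(C,C_2))$, and since $z\mapsto M_1$ dominates $C_1$ these cokernels are dense in $C_1$, giving (A).

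\textbf{Part (B).} I would exhibit $\mathcal Q(S(C,C_2))$ as the image of one irreducible variety. On the irreducible base $S(C,C_2)$ the function $(M,M_2)\mapsto\dim\Hom_\Pi(M_2,M)$ is constant equal to $\hom_\Pi(C_2,C)$ (the lemma that pairs in $S(C,C_2)$ attain the generic Hom-values), so the relative $\Hom_\Pi(M_2,M)$ is a vector bundle over $S(C,C_2)$; its open subvariety $\widetilde{\mathcal R}$ of injective homomorphisms (cokernels are automatically locally free by Proposition~\ref{Dyn:crystalCokernel}) is irreducible of dimension $\dim(C\times C_2)+\hom_\Pi(C_2,C)$. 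Adjoining the $T$-isomorphisms $\tau\colon V/\iota(V_2)\xrightarrow{\sim}V_1$, a torsor bundle under the connected group $GL(V_1)$, yields an irreducible $\widetilde{\mathcal R}'$ of dimension $\dim\widetilde{\mathcal R}+\alpha(\mathbf d_1,\mathbf d_1)$, and the cokernel morphism $\kappa\colon\widetilde{\mathcal R}'\to R^E_\Pi(V_1)$, $(M,M_2,\iota,\tau)\mapsto\tau_*(M/\iota(M_2))$, has image exactly $\mathcal Q(S(C,C_2))$; hence $D=\overline{\Image(\kappa)}$ is irreducible. It remains to show $\dim D\le\beta(\mathbf d_1,\mathbf d_1)$, equivalently that the generic fibre of $\kappa$ has dimension at least $\dim\widetilde{\mathcal R}'-\beta(\mathbf d_1,\mathbf d_1)$. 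I would bound the fibre over a generic $M_1\in C_1$ from below by the injectively parametrised family of quadruples $(M,M_2,\iota,\tau)$ with $M_2\in\mathcal O_N$, with $\iota\colon V_2\hookrightarrow V$ a $T$-monomorphism with locally free cokernel, with $\tau\in\Isom_T(V/\iota(V_2),V_1)$, and with $M$ ranging over the affine space of $\Pi$-structures on $V$ realising $0\to M_2\to M\to M_1\to 0$ compatibly with $\iota,\tau$; by \eqref{Dyn:goodcomplex} and genericity this affine space has dimension $\dim\Der_\Pi(M_1,M_2)=\alpha(\mathbf d_1,\mathbf d_2)-\hom_\Pi(C_1,C_2)+\ext^1_\Pi(C_1,C_2)$. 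Adding the contributions $\beta(\mathbf d_2,\mathbf d_2)$ (for $M_2$), $\alpha(\mathbf d_2,\mathbf d)$ (for $\iota$), $\alpha(\mathbf d_1,\mathbf d_1)$ (for $\tau$) and $\dim\Der_\Pi(M_1,M_2)$, and simplifying using $\hom_\Pi(C_2,C)=\hom_\Pi(C_2,C_1)+\hom_\Pi(C_2,C_2)$ (from the sequence $0\to\Hom_\Pi(N,N)\to\Hom_\Pi(N,M)\to\Hom_\Pi(N,M_1)\to 0$, short because $\Ext^1_\Pi(N,N)=0$, applied to a generic extension $M$, using the fact from (A) that $\dim\Ext^1_\Pi(M,N)=\ext^1_\Pi(C,C_2)$ there together with Ext-symmetry), the definition of $(\mathbf d_1,\mathbf d_2)$, and the Euler relation \eqref{binary:CompExtEuler}, one finds that this lower bound equals $\dim\widetilde{\mathcal R}'-\beta(\mathbf d_1,\mathbf d_1)$ exactly. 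This yields (B), and with (A) the proposition.

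\textbf{Main obstacle.} The hard part is this last dimension count, specifically making the lower bound on the generic $\kappa$-fibre numerically tight; this is where rigidity of $C_2$ is genuinely used — once to ensure the generic element of $C_2$ is $N$, so that $M_2$ contributes the full $\dim C_2=\beta(\mathbf d_2,\mathbf d_2)$, and once to collapse the long exact $\Hom_\Pi(N,-)$ sequence via $\Ext^1_\Pi(N,N)=0$. The bookkeeping is essentially the Claim~(i)--(vi) in the proof of Theorem~\ref{binary:wellDefined} read backwards. A few minor points to verify: that $Z\subseteq C$ (so $M(z)\in\widehat C$ is an open condition on $Z$); that the $T$-module sequence $0\to V_2\to V\to V/\iota(V_2)\to 0$ splits, so the space of extension structures $M$ above may be identified with $\Der_\Pi(M_1,M_2)$ (it does, $V/\iota(V_2)$ being locally free hence relatively projective over $T$); and that the generic fibre of $\kappa$ may in fact be strictly larger than the exhibited family, e.g.\ through embedded copies of modules in $\widehat{C_2}\setminus\mathcal O_N$ — but this only decreases $\dim D$, so the inequality, and hence the proposition, is unaffected.
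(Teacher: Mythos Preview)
Your Part (A) is exactly the paper's argument: the paper proves the same claim $C_1\subseteq\overline{\mathcal Q(S(C,C_2))}$ via the derivation variety $Z$ from Theorem~\ref{binary:wellDefined}, using rigidity of $C_2$ to identify the generic $M_2$ with $N$ and density of $GL(V).Z$ in $C$ to find points $z\in Z$ with $(M(z),M_2)\in S(C,C_2)$. Your irreducibility argument for $D$ in Part (B) is likewise the same vector-bundle argument the paper uses.

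Where you diverge is in the dimension count of Part (B), and this entire count is unnecessary. You already note that $C_1$ is an irreducible \emph{component} of $R^E_\Pi(V_1)$, and $D\subseteq R^E_\Pi(V_1)$ by Proposition~\ref{Dyn:crystalCokernel} together with closedness of $R^E_\Pi(V_1)$. Since $D$ is irreducible and $C_1\subseteq D$, maximality of $C_1$ among irreducible closed subsets forces $D=C_1$ immediately --- no bound on $\dim D$ is needed. This is precisely what the paper does: it stops after (A) and irreducibility. So what you flag as the ``main obstacle'' simply does not arise. (Incidentally, your fibre argument as written conflates ``generic fibre of $\kappa$'' with ``fibre over a generic $M_1\in C_1$''; these agree only once $D=C_1$ is known, so that route would be mildly circular and would need to be rephrased as ``$\kappa^{-1}(C_1)$ is dense in $\widetilde{\mathcal R}'$'' --- but again, none of this is required.)
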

    \begin{proof}
        We set $C=C_1*C_2$ and consider the variety of maps between generic modules in $C_2$ and $C$
        \[\Tilde{X} = \{(M,M_2,\varphi) \mid (M,M_2)\in S(C,C_2),\ \varphi \in \Hom_\Pi(M_2,M)\}.\]
        Since $\dim \Hom_\Pi(M_2,M)$ is constant on $S(C,C_2)$, the variety $\Tilde{X}$ is a vector bundle over $S(C,C_2)$. In particular, it is irreducible. Now we restrict to injections into a complement of $V_1$
        \[ X = \{(M,M_2,\varphi)\in \Tilde{X} \mid \varphi \textnormal{ is injective and } \Image(\varphi)\cap V_1 = 0 \}. \]
        This is an open subset of $\Tilde{X}$ because both conditions are open. However, a priori it might be empty. We will study the map
        \begin{align*}
            F:X &\rightarrow R^E_\Pi(V_1)\\
            (M,M_2,\varphi)&\mapsto \Coker(\varphi).
        \end{align*}
        Here, $\Coker(\varphi)$ induces a module structure on $V_1$ via the isomorphism \[V_1\rightarrow V \rightarrow \Coker(\varphi).\] The image of $F$ lies in $R^E_\Pi(V_1)$ by Proposition \ref{Dyn:crystalCokernel} and it is irreducible because $X$ is. Note that $GL(V).F(X)$ is $Q(S(C,C_2))$. We will need the following claim.
        \begin{Claim}
            $C_1 \subseteq \overline{F(X)}$.
        \end{Claim}
        Using the claim we can show that 
        \[C_1 \subseteq \overline{F(X)} \subseteq \overline{GL(V).F(X)} = \overline{Q(S(C,C_2))} = \faktor{(C_1 *C_2)}{C_2}.\]
        Here, the two inclusions are inclusions between closed irreducible sets, hence actually equalities. This implies the proposition. We still need to prove the claim.
        \begin{proof}[Proof of the claim]\renewcommand{\qedsymbol}{}
            We need to construct a dense open subset of $C_1$ over which the fibres of $F$ are non-empty. Let us recall the variety $Z$ from Theorem \ref{binary:wellDefined}
            \[ Z = \{(M_1,M_2,d) \mid (M_1,M_2)\in S(C_1,C_2),\ d \in \Der_\Pi(M_1,M_2)\}.\]
            As shown in the proof of Theorem \ref{binary:wellDefined}, $Z$ is a vector bundle over $S(C_1,C_2)$. In addition, it is a subvariety of $C$ by interpreting elements as the module defined by $d$. By definition, $GL(V).Z = \mathcal{E}(S(C_1,C_2))$ is dense in $C$. Let $p_1:C \times C_2 \rightarrow C$ be the first projection. Then $p_1(S(C,C_2))\cap (GL(V).Z)$ is open in $GL(V).Z$ and non-empty. Since $p_1(S(C,C_2))$ is $GL(V)$-stable, $p_1(S(C,C_2))\cap Z$ is open in $Z$ and non-empty. Let $\pi_1: Z \rightarrow C_1$ be the first projection. Then
             \[ U = \pi_1 (p_1(S(C,C_2))\cap Z) \]
            is our desired open subset of $C_1$. Let $M_1\in U$. It remains to construct a preimage of $M_1$ under $F$. By the construction of $U$ there exists $(M,M_2)\in S(C,C_2)$ such that $p_1(M,M_2) = (M_1,M_2',d)\in Z$. Because $M_2$ and $M_2'$ are generic modules in $C_2$, they must both be the unique rigid module in $C_2$, in particular isomorphic. This allows us to define a short exact sequence
            \[ 0 \rightarrow M_2 \rightarrow M \rightarrow M_1 \rightarrow 0. \]
            Here the homomorphism of $T$-modules underlying the first map is the embedding $V_2\rightarrow V$ and the one underlying the second map is the projection $V \rightarrow V_1$. The sequence shows that $M_1\in \overline{F(X)}$, proving the claim.
        \end{proof}
    \end{proof}
    The previous proposition can be rephrased to give a cancellation property. 
    \begin{Corollary}
        Let $V = V_1 \oplus V_2$ be a direct sum of two locally free $T$-modules and let $C_1, C_1' \in \comp(V_1)$, $C_2\in \comp(V_2)$ be components where $C_2$ is rigid. We assume
        \[ C_1 * C_2 = C_1' * C_2.\]
        Then $C_1 = C_1'$.
    \end{Corollary}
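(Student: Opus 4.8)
The plan is to deduce this statement immediately from Proposition \ref{binary:cancellation}. I would set $C = C_1 * C_2$, which by hypothesis equals $C_1' * C_2$ as well. Since $C_2$ is rigid, Proposition \ref{binary:cancellation} applies to the pair $(C_1, C_2)$ and gives $\faktor{C}{C_2} = C_1$; in particular the generic-cokernel construction $\overline{\mathcal{Q}(S(C,C_2))}$ is a well-defined element of $\comp$. Applying the same proposition to $(C_1', C_2)$ — legitimate because $C_2$ is still rigid and $C_1' * C_2 = C$ — yields $\faktor{C}{C_2} = C_1'$. Both equalities refer to one and the same set $\overline{\mathcal{Q}(S(C,C_2))}$, which is attached to the fixed component $C$ and to $C_2$ and in no way remembers how $C$ was presented as a $*$-product; hence $C_1 = \faktor{C}{C_2} = C_1'$.

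I expect this step to present no real obstacle: all of the geometric content has already been absorbed into Proposition \ref{binary:cancellation}, whose proof builds the vector bundle $\widetilde{X}$ of homomorphisms over $S(C,C_2)$, passes to the open locus $X$ of injections whose image is complementary to $V_1$, and uses the bundle $Z$ from Theorem \ref{binary:wellDefined} to exhibit a dense open subset of $C_1$ inside $\overline{F(X)}$. The only thing worth making explicit in the write-up is the well-definedness remark above, namely that $\faktor{C}{C_2} = \overline{\mathcal{Q}(S(C,C_2))}$ depends only on the data $(C, C_2)$, so that the two invocations of the proposition genuinely compute the same component.

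For completeness one may also record the mirror-image cancellation law, proved in exactly the same way from the kernel version $\mfaktor{C_1}{(C_1 * C_2)} = C_2$ dual to Proposition \ref{binary:cancellation}: if $C_1$ is rigid, then $C_1 * C_2 = C_1 * C_2'$ forces $C_2 = C_2'$. Together with the corollary above, these are precisely the two cancellation statements announced in the introduction.
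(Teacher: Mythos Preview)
Your argument is correct and is exactly the paper's own proof: apply Proposition \ref{binary:cancellation} twice to the common product $C = C_1 * C_2 = C_1' * C_2$ to obtain $C_1 = \faktor{C}{C_2} = C_1'$. Your added remark that $\faktor{C}{C_2}$ depends only on $(C,C_2)$ is the one point the paper leaves implicit, and your mention of the dual statement matches the subsequent corollary.
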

    \begin{proof} This follows from the previous theorem:
        \[C_1 =  \faktor{(C_1 * C_2)}{C_2} = \faktor{(C_1' * C_2)}{C_2} = C_1'.\]
    \end{proof}
    The dual statements are:
    \begin{Proposition}
         Let $V = V_1 \oplus V_2$ be a direct sum of two locally free $T$-modules and let $C_1 \in \comp(V_1)$, $C_2\in \comp(V_2)$ be components with $C_1$ rigid. Then
         \[\mfaktor{C_1}{(C_1 * C_2)} = C_2.\]
    \end{Proposition}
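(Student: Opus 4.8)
The plan is to dualise the proof of Proposition~\ref{binary:cancellation} throughout, replacing monomorphisms and cokernels by epimorphisms and kernels and Proposition~\ref{Dyn:crystalCokernel} by Proposition~\ref{Dyn:crystalKernel}. Write $C = C_1 * C_2$. First I would form the variety of maps from generic modules of $C$ onto generic modules of $C_1$,
\[ \Tilde{X} = \{(M_1, M, \varphi) \mid (M_1, M) \in S(C_1, C),\ \varphi \in \Hom_\Pi(M, M_1)\}. \]
Since $\dim \Hom_\Pi(M,M_1)$ is constant on $S(C_1,C)$, this is a vector bundle over $S(C_1,C)$, hence irreducible. On the open (a priori possibly empty) subset
\[ X = \{(M_1, M, \varphi) \in \Tilde{X} \mid \varphi \text{ surjective and } \ker(\varphi) \cap V_1 = 0\} \]
a triple induces a splitting $V = V_1 \oplus \ker(\varphi)$ of $T$-modules, and hence a $\Pi$-module structure on $V_2$ transported along $\ker(\varphi)\hookrightarrow V \twoheadrightarrow V/V_1 \xrightarrow{\ \sim\ } V_2$. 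By Proposition~\ref{Dyn:crystalKernel} this yields a morphism $F\colon X \to R^E_\Pi(V_2)$, $(M_1,M,\varphi)\mapsto \ker(\varphi)$, whose image is irreducible, and one checks that $GL(V_2).F(X) = \mathcal{K}(S(C_1,C))$.

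Exactly as in Proposition~\ref{binary:cancellation}, everything then reduces to showing $C_2 \subseteq \overline{F(X)}$. Granting this, we get a chain of irreducible closed subsets
\[ C_2 \subseteq \overline{F(X)} \subseteq \overline{GL(V_2).F(X)} = \overline{\mathcal{K}(S(C_1,C))} = \mfaktor{C_1}{(C_1 * C_2)} \]
inside $R^E_\Pi(V_2)$, and since $C_2$ is a component of $R^C_\Pi(V_2)$, hence a maximal irreducible closed subset of $R^E_\Pi(V_2)$ by Proposition~\ref{Dyn:CrystalComps}, all inclusions are equalities, proving the proposition.

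To prove $C_2 \subseteq \overline{F(X)}$ I would reuse the vector bundle $Z = \{(M_1,M_2,d)\mid (M_1,M_2)\in S(C_1,C_2),\ d\in\Der_\Pi(M_1,M_2)\}$ over $S(C_1,C_2)$ from the proof of Theorem~\ref{binary:wellDefined}, viewed as a subvariety of $C$ via $d \mapsto$ the module it defines, with $GL(V).Z = \mathcal{E}(S(C_1,C_2))$ dense in $C$. Since the projection $p_2\colon C_1\times C \to C$ is open and $p_2(S(C_1,C))$ is $GL(V)$-stable, the set $p_2(S(C_1,C)) \cap Z$ is a nonempty open subset of $Z$; intersecting it further with the dense open locus of points whose first component lies in the orbit of the unique rigid module $N_1$ of $C_1$, and applying the open map $Z \to C_2$, I obtain a dense open $U \subseteq C_2$. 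For $M_2 \in U$ there is a point $(M_1', M_2, d) \in Z$ with $M_1' \cong N_1$ whose associated module $M$ lies in $p_2(S(C_1,C))$; the derivation $d$ realises $M_2$ as the kernel of the canonical surjection $M\to M_1'$ whose underlying $T$-linear map is the projection $V \to V_1$. As $M \in p_2(S(C_1,C))$, the set $\{N \in \widehat{C}_1 \mid (N,M) \in S(C_1,C)\}$ is a nonempty open, hence dense, subset of $C_1$, so it contains some $M_1 \cong N_1 \cong M_1'$; composing the canonical surjection with an isomorphism $M_1' \xrightarrow{\ \sim\ } M_1$ produces a triple $(M_1, M, \varphi) \in X$ with $F(M_1,M,\varphi) = M_2$, so that $M_2 \in F(X)$.

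The step I expect to be the main obstacle is the bookkeeping in this last paragraph, exactly as in Proposition~\ref{binary:cancellation}: one must arrange that the generic module $M_1'$ intrinsic to the chosen point of $Z$ and the generic module $M_1$ produced from $S(C_1,C)$ are genuinely isomorphic -- this is where rigidity of $C_1$ enters, forcing both into the dense orbit of $N_1$ -- and that the underlying $T$-module maps match up, so that $\varphi$ really lies in $X$ and has kernel the prescribed $M_2$ on the nose rather than merely up to isomorphism. Beyond that, the argument is a routine transcription of the cokernel case, the only new ingredient being Proposition~\ref{Dyn:crystalKernel} in place of Proposition~\ref{Dyn:crystalCokernel}.
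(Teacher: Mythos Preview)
Your proposal is correct and follows exactly the approach the paper intends: the paper states this proposition as the dual of Proposition~\ref{binary:cancellation} without a separate proof, and your argument is a faithful dualisation of that proof, swapping monomorphisms/cokernels for epimorphisms/kernels and invoking Proposition~\ref{Dyn:crystalKernel} in place of Proposition~\ref{Dyn:crystalCokernel}. Your extra step of intersecting with the open orbit of the rigid module $N_1$ before projecting to $C_2$ is in fact a cleaner way of making precise the paper's informal assertion that ``generic modules in $C_2$ must both be the unique rigid module''; it is the same idea, just written out more carefully.
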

    \begin{Corollary}
        Let $V = V_1 \oplus V_2$ be a direct sum of two locally free $T$-modules and let $C_1 \in \comp(V_1)$, $C_2,C_2'\in \comp(V_2)$ be components with $C_1$ rigid. We assume 
        \[ C_1 * C_2 = C_1 * C_2'.\]
        Then $C_2 = C_2'$.
    \end{Corollary}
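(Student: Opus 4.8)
The plan is to deduce this corollary from the preceding proposition, exactly as the analogous cancellation corollary for cokernels was deduced from Proposition~\ref{binary:cancellation}. Granting that $\mfaktor{C_1}{(C_1 * C_2)} = C_2$ whenever $C_1$ is rigid, the hypothesis $C_1 * C_2 = C_1 * C_2'$ immediately gives
\[ C_2 = \mfaktor{C_1}{(C_1 * C_2)} = \mfaktor{C_1}{(C_1 * C_2')} = C_2', \]
which is the assertion. Thus the only thing that really needs an argument is the preceding proposition, and I would obtain it by dualizing the proof of Proposition~\ref{binary:cancellation}: everywhere cokernels of injections are replaced by kernels of surjections, and the roles of the sub- and quotient module in the short exact sequences are exchanged.

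Concretely, for the proof of $\mfaktor{C_1}{(C_1 * C_2)} = C_2$ I would set $C = C_1 * C_2$ and study
\[ \widetilde{X} = \{(M_1, M, \psi) \mid (M_1, M) \in S(C_1, C),\ \psi \in \Hom_\Pi(M, M_1)\}. \]
Because $\dim \Hom_\Pi(M, M_1)$ is constant on $S(C_1, C)$ — it is the generic hom-value — the set $\widetilde{X}$ is a vector bundle over $S(C_1, C)$ and hence irreducible. Inside $\widetilde{X}$ I take the open subset $X$ of those triples for which $\psi$ is surjective and $\ker(\psi) \cap V_1 = 0$; for such a triple the composite $\ker(\psi) \hookrightarrow V \to V_2$ of the inclusion with the projection is a $T$-isomorphism, so $\ker(\psi)$ transports a $\Pi$-module structure to $V_2$, giving a morphism
\[ F : X \to R^E_\Pi(V_2), \qquad (M_1, M, \psi) \mapsto \ker(\psi), \]
with image inside $R^E_\Pi(V_2)$ by Proposition~\ref{Dyn:crystalKernel} and irreducible since $X$ is. By construction $GL(V).F(X) = \mathcal{K}(S(C_1, C))$, so once the key claim $C_2 \subseteq \overline{F(X)}$ is established the chain
\[ C_2 \subseteq \overline{F(X)} \subseteq \overline{GL(V).F(X)} = \overline{\mathcal{K}(S(C_1, C))} = \mfaktor{C_1}{(C_1 * C_2)} \]
consists of inclusions of closed irreducible sets and is therefore a chain of equalities; in particular $\mfaktor{C_1}{(C_1 * C_2)}$ is a well-defined element of $\comp$.

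For the claim $C_2 \subseteq \overline{F(X)}$ I would mirror the corresponding step of Proposition~\ref{binary:cancellation}. Recall from Theorem~\ref{binary:wellDefined} the vector bundle $Z$ over $S(C_1, C_2)$, regarded as a subvariety of $C$ via the module defined by the derivation, with $GL(V).Z = \mathcal{E}(S(C_1, C_2))$ dense in $C$. Writing $p_2 : C_1 \times C \to C$ for the second projection, the set $p_2(S(C_1, C))$ is open and $GL(V)$-stable in $C$, so $p_2(S(C_1, C)) \cap Z$ is a non-empty open subset of $Z$, and its image $U$ under the projection $Z \to C_2$, $(M_1, M_2, d) \mapsto M_2$, is a dense open subset of $C_2$. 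Given $M_2 \in U$, one recovers a point $M \in C$ lying in $p_2(S(C_1, C)) \cap Z$, hence fitting into a short exact sequence $0 \to M_2 \to M \to M_1' \to 0$ with $M_1'$ a generic module of $C_1$, together with some $N_1 \in \widehat{C}_1$ such that $(N_1, M) \in S(C_1, C)$. Since $C_1$ is rigid, $M_1'$ and $N_1$ are both the unique rigid module of $C_1$ and hence isomorphic; composing the quotient map $M \to M_1'$ with this isomorphism produces a surjection $\psi : M \to N_1$ whose underlying $T$-maps are the canonical inclusion and projection, so $(N_1, M, \psi) \in X$ and $F(N_1, M, \psi) = M_2$. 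Thus $U \subseteq F(X)$ and $C_2 = \overline{U} \subseteq \overline{F(X)}$.

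I do not expect a genuine obstacle here: the argument is a line-by-line dual of Proposition~\ref{binary:cancellation}. The only points that need care are the bookkeeping — keeping straight which of $V_1, V_2$ is the sub and which the quotient, so that the openness conditions cutting out $X$ and the transported module structure on $V_2$ are set up consistently — and the input, already used in the cokernel case, that any two generic modules of the rigid component $C_1$ are isomorphic, which is exactly what allows the two short exact sequences appearing in the claim to be glued together.
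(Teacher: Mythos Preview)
Your argument is correct and matches the paper's approach: the corollary is obtained immediately from the dual proposition $\mfaktor{C_1}{(C_1*C_2)}=C_2$, exactly as you write. The paper itself merely states that proposition as the dual of Proposition~\ref{binary:cancellation} without spelling out a proof, so your sketch of the dualization in fact goes further than what appears in the text.
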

    \begin{Example}[Leclerc's counterexample]
        Leclerc's counterexample is the smallest non rigid component in type $A$. It reveals a number of subtleties in these constructions. Let $C$ be of type $A_5$ with minimal symmetrizer. There is a $\mathbb{P}^1$-indexed family of $\Pi$-modules $M_{[\lambda:\mu]}$ of rank vector $\mathbf{d} = (1,2,2,2,1)$ given by 
        \[\begin{tikzcd}
        	& 2 && 4 \\
        	1 && 3\oplus3 && 5. \\
        	& 2 && 4
        	\arrow["1"{description}, from=1-2, to=2-1]
        	\arrow["{\left(\begin{smallmatrix}1 \\ 0\end{smallmatrix}\right)}"{description}, from=1-2, to=2-3]
        	\arrow["{\left(\begin{smallmatrix}0 \\ 1\end{smallmatrix}\right)}"{description}, from=1-4, to=2-3]
        	\arrow["1"{description}, from=1-4, to=2-5]
        	\arrow["1"{description}, from=2-5, to=3-4]
        	\arrow["{(\mu \ 1)}"{description}, from=2-3, to=3-4]
        	\arrow["{(1 \ \lambda)}"{description}, from=2-3, to=3-2]
        	\arrow["1"{description}, from=2-1, to=3-2]
        \end{tikzcd}\]
        They are indecomposable and pairwise non-isomorphic. They each have a three dimensional endomorphism ring but only two dimensional homomorphism spaces between non-isomorphic modules. Their orbits are 11 dimensional. Therefore, the family forms a 12 dimensional subvariety $C$ of $R^C_\Pi(\mathbf{d})$, which itself has dimension 12. This is the smallest non rigid component in $\comp$. Notably, $S(C,C)$ does not intersect the diagonal of $C \times C$. Despite every element of the family having non trivial self-extensions, the component still satisfies $\ext^1_\Pi(C,C)=0$. So we get $C * C = \overline{C \oplus C}$. On the other hand, for every $[\lambda:\mu]\in \mathbb{P}^1$ there is a short exact sequence
        \[0 \rightarrow M_{[\lambda:\mu]}\rightarrow \Pi e_2 \oplus \Pi e_4 \rightarrow M_{[\lambda:\mu]}\rightarrow 0.\]
        Let $C'$ be the component corresponding to the rigid module $\Pi e_2 \oplus \Pi e_4$. The short exact sequence implies $\faktor{C'}{C} = C = \mfaktor{C}{C'}$. So the division is defined but not inverse to $*$. Furthermore $\faktor{\overline{C \oplus C}}{C}$ is not defined. 
    \end{Example}
    \subsection{Change of symmetrizer}
    It turns out that the set $\comp$ only depends on $C$, not on the symmetrizer $D$. This was shown in \cite{GLS4}. Thus one might hope that the binary operation is independent of $D$, too. We will show this in the symmetric case. The symmetrizable case is open.\\
    For this section, we assume that $C$ is symmetric and connected. Then its minimal symmetrizer is $D = \diag(1,\dots,1)$ and every other symmetrizer is of the form $nD$ for some natural number $n$. We will compare $D$ and $nD$ for a fixed $n\in\mathbb{N}$. Objects defined using $D$ will be denoted by $-(1)$ and objects defined using $nD$ will be denoted by $-(n)$. So $\Pi(1) = \Pi(C,D,\Omega)$ and $\Pi(n) = \Pi(C,nD,\Omega)$. Note that $\Pi(1)$ is the preprojective algebra of a quiver, the varieties $R^C_{\Pi(1)}(V)$ are the nilpotent varieties studied by Lusztig \cite{LusztigSemicanonical} and the binary operation on $\comp_{(1)}$ is the one from \cite{lapid1}. Some of the considerations in this part are based on \cite{GLS2}.
    \begin{Definition}
        Consider the element
        \[\varepsilon = \sum_{i\in Q_0} \varepsilon_i \in \Pi(n).\]
        Let $M$ be a $\Pi(n)$-module. The reduction of $M$ is the $\Pi(1)$-module 
        \[\overline{M} = M/\varepsilon M.\]
        This defines a functor $R:\modulecat(\Pi(n))\rightarrow\modulecat(\Pi(1))$.
    \end{Definition}
    We collect some important properties of $R$.
    \begin{Lemma}$\ $\label{binary:redexact}
        \begin{enumerate}[label=(\roman*)]
            \item Let $M$ be a locally free $\Pi(n)$-module. Then $\overline{M}$ is a locally free $\Pi(1)$-module of the same rank vector.
            \item Let 
            \[0\rightarrow M_1 \rightarrow M \rightarrow M_2 \rightarrow 0\]
            be a short exact sequence of locally free $\Pi(n)$-modules. Then 
            \[0\rightarrow \overline{M_1} \rightarrow \overline{M} \rightarrow \overline{M_2} \rightarrow 0\]
            is a short exact sequence of locally free $\Pi(1)$-modules.
            \item Let $M$ be a crystal $\Pi(n)$-module. Then $\overline{M}$ is a crystal $\Pi(1)$-module.
        \end{enumerate}
    \end{Lemma}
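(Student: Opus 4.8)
The plan is to prove the three parts in order, using (i) for (ii) and both for (iii). The structural input is that $\Pi(1)$ is the quotient of $\Pi(n)$ by the two-sided ideal $(\varepsilon)$ generated by $\varepsilon=\sum_{i\in Q_0}\varepsilon_i$: since $C$ is symmetric we have $f_{ij}=1$ for all $i,j$, so the only relations of $\Pi(n)$ involving $\varepsilon$ are the nilpotency relations $\varepsilon_i^n=0$ and the commutativity relations $\varepsilon_i\alpha_{ij}^{(g)}=\alpha_{ij}^{(g)}\varepsilon_j$; both become trivial in $\Pi(n)/(\varepsilon)$, which therefore has the same generators and relations as $\Pi(1)$ (where $\varepsilon_i=\varepsilon_i^{c_i}=0$ as well). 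Next, for $m=\sum_i m_i\in M$ one computes $\varepsilon m=\sum_i M_{\varepsilon_i}(m_i)$, so $\varepsilon M=\bigoplus_i\Image(M_{\varepsilon_i})$, and the commutativity relation $\varepsilon_k\alpha_{kl}^{(g)}=\alpha_{kl}^{(g)}\varepsilon_l$ shows this is a $\Pi(n)$-submodule. Hence $\overline M=M/\varepsilon M$ is annihilated by $\varepsilon$, so it is genuinely a $\Pi(1)$-module, $M\mapsto\overline M$ is functorial, and $R\cong(-)\otimes_{\Pi(n)}\Pi(1)$ is right exact. For part (i): if $M$ is locally free then $M_i\cong H_i(n)^{\oplus d_i}$ with $d_i=\rankv(M)_i$, hence $\overline M_i=M_i/\varepsilon_i M_i\cong (H_i(n)/XH_i(n))^{\oplus d_i}\cong K^{\oplus d_i}$; as $H_i(1)=K$, this says $\overline M$ is locally free with the same rank vector.

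For part (ii) I would argue vertex by vertex. At $i$ the given sequence becomes a short exact sequence $0\to M_{1,i}\to M_i\to M_{2,i}\to 0$ of free $H_i(n)=K[X]/(X^n)$-modules; since $M_{2,i}$ is free and hence projective, the sequence splits, so it remains exact after $-\otimes_{H_i(n)}K$, which is exactly reduction at $i$. Therefore $0\to(\overline{M_1})_i\to\overline M_i\to(\overline{M_2})_i\to 0$ is exact for every $i$; by functoriality of $R$ these are the vertexwise components of the $\Pi(1)$-module maps $R$ assigns to the original maps, so $0\to\overline{M_1}\to\overline M\to\overline{M_2}\to 0$ is exact, and local freeness of its terms is part (i).

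For part (iii) I would use that over $\Pi(1)$ (minimal symmetrizer) the properties ``crystal'' and ``$E$-filtered'' coincide, so it is enough to show $\overline M$ is $E$-filtered over $\Pi(1)$ --- this sidesteps checking the inductive clauses on $\sub_i$, $\fac_i$, $K_i$, $Q_i$. A crystal $\Pi(n)$-module $M$ is in particular $E$-filtered, say $0=M^0\subset\cdots\subset M^r=M$ with $M^{k+1}/M^k\cong E_{i_k}(n)$. Every $M^k$, every $M/M^k$ and every subquotient is $E$-filtered, hence locally free, so part (ii) applies to $0\to M^k\to M^{k+1}\to M^{k+1}/M^k\to 0$ and to $0\to M^k\to M\to M/M^k\to 0$, exhibiting $0=\overline{M^0}\subseteq\cdots\subseteq\overline{M^r}=\overline M$ as a filtration of $\overline M$ by $\Pi(1)$-submodules with subquotients $\overline{E_{i_k}(n)}$. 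A direct computation shows $\overline{E_i(n)}\cong E_i(1)$, so this is an $E$-filtration and part (iii) follows.

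I expect part (ii) to be the only genuine obstacle, and even there the content is minimal: one must use local freeness at each vertex to get projectivity over the non-semisimple ring $K[X]/(X^n)$ and hence exactness of reduction on this subcategory --- which is also why $R$ fails to be exact on all of $\modulecat\Pi(n)$ (it collapses inclusions like $S_i\hookrightarrow E_i(n)$ to zero). The remaining work is bookkeeping: matching rank vectors, identifying $\Pi(1)$ with $\Pi(n)/(\varepsilon)$, and invoking the equivalence of ``crystal'' and ``$E$-filtered'' over $\Pi(1)$.
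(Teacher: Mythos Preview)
Your proof is correct and follows essentially the same route as the paper: parts (i) and (ii) are checked vertex by vertex via the representation theory of $K[X]/(X^n)$ (the paper simply cites \cite{GLS2} for the details you spell out), and part (iii) is reduced to showing $\overline{M}$ is $E$-filtered by invoking the equivalence of ``crystal'' and ``$E$-filtered'' over $\Pi(1)$ and then applying (ii) to an $E$-filtration of $M$.
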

    \begin{proof}
        The claims $(i)$ and $(ii)$ can be checked at each vertex independently. There, they follow from the representation theory of $K[X]/(X^n)$. Details can be found in Proposition 2.2 in \cite{GLS2}. Note that in (i) the claim '$\overline{M}$ is locally free' is empty because all $\Pi(1)$-modules are locally free. The rank vector is then just the dimension vector.\\
        For claim $(iii)$ we only need to show that $\overline{M}$ is $E$-filtered because over $\Pi(1)$ 'crystal', '$E$-filtered' adn 'nilpotent' are equivalent. We pick a filtration which witnesses that $M$ is $E$-filtered. By $(ii)$ the reduction of that filtration witnesses that $\overline{M}$ is $E$-filtered.
    \end{proof}
    This functor induces a morphism between the representation varieties of $\Pi(n)$ and $\Pi(1)$.
    \begin{Definition}
        Let $V$ be a locally free $T(n)$-module. We define a morphism
        \begin{align*}
            R: R^C_{\Pi(n)}(V)&\rightarrow R^C_{\Pi(1)}(V/\varepsilon V)\\
            M &\mapsto \overline{M}.
        \end{align*}
    \end{Definition}
    This morphism will allow a direct comparison between the binary operations $*_{(n)}$ and $*_{(1)}$. The varieties $R^C_{\Pi(n)}(V)$ and $R^C_{\Pi(n)}(V/\varepsilon V)$ have the same number of irreducible components because both are parametrised by the same subset of $B(-\infty)$. We are going to show that $R$ is surjective. Then the images of the irreducible components of $R^C_{\Pi(n)}(V)$ will be the irreducible components of $R_{\Pi(1)}(V/\varepsilon V)$. The following construction will give a section of $R$.
    \begin{Construction}
        Let $V$ be a locally free $T(1)$-module and $M\in R^C_{\Pi(1)}(V)$. We consider the locally free $T(1)$-module $M^{\oplus n}$ and define a locally free $\Pi(n)$-module structure $\widetilde{M}$ on a locally free $T(n)$-module $\widetilde{V}$ by replacing the $\varepsilon$-action on $M^{\oplus n}$ with the action \[\varepsilon(m,i) = \begin{cases}
            (m,i+1) &\textnormal{ for } m\in M, 1\leq i \leq n-1,\\
            0 &\textnormal{ for } m\in M, i = n.
        \end{cases}\]
    \end{Construction}
    The module $\widetilde{M}$ satisfies the nilpotency and commutativity relations by construction. The mesh relation does not involve the $\varepsilon_i$ because $C$ is symmetric. So $\widetilde{M}$ satisfies it since each copy of $M$ satisfies it. This construction does not work in the symmetrizable case. Let $M$ be a $\Pi(1)$-module. To ensure that $\widetilde{M}$ is a preimage of $M$ under $R$, we need to check that it lies in $R^C_{\Pi(n)}(\widetilde{V})$. This is done in the next lemma.
    \begin{Lemma}
        Let $M$ be a crystal module over $\Pi(1)$. Then $\widetilde{M}$ is a crystal module over $\Pi(n)$.
    \end{Lemma}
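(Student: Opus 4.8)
The plan is to verify the crystal condition directly from its inductive definition, using the reduction functor $R$ as a bookkeeping device. First I would observe that $\widetilde{M}$ is $E$-filtered: if $0 = N_0 \subset N_1 \subset \dots \subset N_r = M$ is an $E$-filtration of $M$ over $\Pi(1)$, then since $\widetilde{(-)}$ is (by its very construction) additive and exact on the underlying $T$-level, applying it termwise gives a filtration of $\widetilde{M}$ whose subquotients are $\widetilde{E_i}$; and $\widetilde{E_i}$ is precisely the free rank-one $H_i(n)$-module $E_i$ over $\Pi(n)$. Hence $\widetilde{M}$ is $E$-filtered over $\Pi(n)$, so in particular locally free, and $R(\widetilde{M}) = \overline{\widetilde{M}} \cong M$.

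Next, the inductive step. I would show that the constructions $\sub_i$, $\fac_i$, $K_i$, $Q_i$ are all compatible with $\widetilde{(-)}$, in the precise sense that $\sub_i(\widetilde{M}) \cong \widetilde{\sub_i(M)}$, and likewise for the other three. The key point is that $\widetilde{(-)}$ sends a $\Pi(1)$-submodule $N \subset M$ to the $\Pi(n)$-submodule $\widetilde{N} \subset \widetilde{M}$ given by $N^{\oplus n}$ with the shift $\varepsilon$-action, and that this is an order-preserving, lattice-compatible operation on submodules that preserves "supported only at $i$" (since the extra $\varepsilon$-layers live over the same vertex) and preserves maximality. So the canonical short exact sequences defining $K_i(M)$ and $Q_i(M)$ are carried by $\widetilde{(-)}$ to the canonical short exact sequences defining $K_i(\widetilde M)$ and $Q_i(\widetilde M)$. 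Then $\sub_i(\widetilde M) \cong \widetilde{\sub_i(M)}$ and $\fac_i(\widetilde M) \cong \widetilde{\fac_i(M)}$ are locally free over $\Pi(n)$ by part (i) of Lemma~\ref{binary:redexact} applied to the section, or more directly because $\widetilde{(-)}$ of any $\Pi(1)$-module is locally free by the paragraph above; and $K_i(\widetilde M) \cong \widetilde{K_i(M)}$, $Q_i(\widetilde M) \cong \widetilde{Q_i(M)}$ are crystal modules over $\Pi(n)$ by the induction hypothesis (applied to $K_i(M)$ and $Q_i(M)$, which are crystal $\Pi(1)$-modules of strictly smaller dimension). This closes the induction, the base case being $\widetilde{0} = 0$.

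The main obstacle I anticipate is proving the compatibility $\sub_i(\widetilde M) \cong \widetilde{\sub_i(M)}$ and, dually, $\fac_i(\widetilde M) \cong \widetilde{\fac_i(M)}$ — i.e. that the $\widetilde{(-)}$-image of the largest submodule supported at $i$ is again the largest such submodule, and not merely \emph{a} submodule supported at $i$ that happens to be $\widetilde{(-)}$ of something. One must rule out that $\widetilde M$ acquires "new" vertex-$i$ submodules not coming from $M$; concretely, one checks that any $\Pi(n)$-submodule $L \subseteq \widetilde M$ supported only at $i$ is stable under the shift $\varepsilon$-action, and that $\overline{L} = L/\varepsilon L$ sits inside $\overline{\widetilde M} = M$ as a submodule supported only at $i$, with $L = \widetilde{\overline L}$ forced by the rank bookkeeping (local freeness of $L$ over $H_i(n)$ together with $\rankv L = \rankv \overline L$, as in Lemma~\ref{binary:redexact}(i)). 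This gives the needed bijection between vertex-$i$ submodules of $\widetilde M$ and of $M$, hence preservation of the maximal one. The dual statement for $\fac_i$ follows by the same argument applied to the largest quotient supported at $i$, using that $\widetilde{(-)}$ is compatible with taking quotients by $\widetilde{(-)}$ of submodules. Everything else is formal once these compatibilities are in hand.
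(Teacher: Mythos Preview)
Your overall strategy is exactly the paper's: show $\widetilde{M}$ is $E$-filtered by applying $\widetilde{(-)}$ to a filtration, then establish the compatibilities $\sub_i(\widetilde{M}) = \widetilde{\sub_i(M)}$, $\fac_i(\widetilde{M}) \cong \widetilde{\fac_i(M)}$, $Q_i(\widetilde{M}) \cong \widetilde{Q_i(M)}$, $K_i(\widetilde{M}) = \widetilde{K_i(M)}$, and conclude by induction on dimension. The paper records precisely these four identities and says they hold ``by construction''.

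However, your justification of the compatibility $\sub_i(\widetilde{M}) = \widetilde{\sub_i(M)}$ contains a genuine error. You claim a bijection between vertex-$i$ submodules of $\widetilde{M}$ and of $M$, forced by $L = \widetilde{\overline{L}}$ via ``local freeness of $L$ over $H_i(n)$''. Both assertions fail already for $M = E_i(1)$: then $\widetilde{M} = H_i(n) \cong K[X]/(X^n)$ has the chain of $n+1$ submodules $0 \subset (X^{n-1}) \subset \dots \subset (X) \subset H_i(n)$, of which only $0$ and $H_i(n)$ are free, and only those two arise as $\widetilde{(-)}$ of a submodule of $M$. Moreover, for $L = (X^{n-1})$ one has $\varepsilon L = 0$ but $L \cap \varepsilon\widetilde{M} = L$, so the map $L/\varepsilon L \to \widetilde{M}/\varepsilon\widetilde{M} = M$ is not injective and $\overline{L}$ does not embed in $M$ as you assert.

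The fix is to bypass the reduction functor and argue directly. On $\widetilde{M}$ each arrow $\alpha_{ji}^{(g)}$ acts diagonally as $n$ copies of its action on $M$, so the common kernel of all outgoing arrows from vertex $i$ is $(\sub_i(M))^{\oplus n} \subset M_i^{\oplus n} = (\widetilde{M})_i$. This subspace is visibly stable under the shift $\varepsilon_i$, hence is a $\Pi(n)$-submodule, and equals $\widetilde{\sub_i(M)}$ on the nose. This is the content of the paper's ``by construction'', and the dual argument gives $\fac_i$; the remaining two identities then follow from exactness of $\widetilde{(-)}$ applied to the defining short exact sequences.
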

    \begin{proof}
        The tilde construction is exact and turns $E_i(1)$ into $E_i(n)$. Hence a filtration of $M$ with subquotients isomorphic to $E_i(1)$ induces a filtration of $\widetilde{M}$ with subquotients isomorphic to $E_i(n)$. This shows that $\widetilde{M}$ is $E$-filtered. By construction we have
        \begin{align*}
            &\widetilde{\sub_i(M)} = \sub_i(\widetilde{M})& \widetilde{Q_i(M)} \cong Q_i(\widetilde{M})\\
            &\widetilde{\fac_i(M)} \cong \fac_i(\widetilde{M})& \widetilde{K_i(M)} = K_i(\widetilde{M}).
        \end{align*}
        In particular, $\widetilde{M}$ is a crystal module by induction.
    \end{proof}
    \begin{Lemma}
        Let $M$ be a $\Pi(1)$-module. We identify $\overline{\widetilde{V}}$ and $V$. Then we have
        \begin{enumerate}[label=(\roman*)]
            \item $\widetilde{M} \in R^C_{\Pi(n)}(\widetilde{V})$,
            \item $\overline{\widetilde{M}} = M$,
            \item the map $R:R^C_{\Pi(n)}(\widetilde{V})\rightarrow R^C_{\Pi(1)}(V)$ is surjective,
            \item the map $R$ induces a bijection $\comp_{(n)}\rightarrow \comp_{(1)}, \ C\mapsto \overline{R(C)}$.
        \end{enumerate}
    \end{Lemma}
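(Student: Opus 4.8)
The plan is to dispatch the four parts in order, with (iii) and (iv) resting on (i) and (ii).

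For (i) I would simply invoke the previous lemma: $\widetilde M$ is a crystal module over $\Pi(n)$, and $R^C_{\Pi(n)}(\widetilde V)$ is by definition the closure of the set of crystal modules on $\widetilde V$, which of course contains that set; hence $\widetilde M\in R^C_{\Pi(n)}(\widetilde V)$. For (ii) I would read off $\varepsilon\widetilde M$ directly from the construction: the $\varepsilon$-action carries the $k$-th copy of $M$ into the $(k+1)$-st and kills the $n$-th, so $\varepsilon\widetilde M$ is the sum of the copies $2,\dots,n$. Each $\alpha_{ij}^{(g)}$ acts without changing the copy index, so this is a $\Pi(n)$-submodule, and the quotient $\widetilde M/\varepsilon\widetilde M$ is the first copy of $M$, with $\varepsilon$ acting as $0$ and each $\alpha_{ij}^{(g)}$ acting as in $M$. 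Under the identification $\overline{\widetilde V}\cong V$, $(m,1)\mapsto m$, this quotient is literally the module $M\in R^C_{\Pi(1)}(V)$, i.e.\ $\overline{\widetilde M}=M$.

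For (iii) the crucial observation is that $R^C_{\Pi(1)}(V)$ is Lusztig's nilpotent variety, so \emph{every} point of it is a nilpotent $\Pi(1)$-module, hence $E$-filtered, hence a crystal module. Given any $M'\in R^C_{\Pi(1)}(V)$, its underlying $T(1)$-module is $V$ (over $\Pi(1)$ the $\varepsilon$-action is zero), so the tilde construction applied to $M'$ produces a $\Pi(n)$-module structure on the same $\widetilde V$; by the previous lemma $\widetilde{M'}\in R^C_{\Pi(n)}(\widetilde V)$, and by part (ii) $R(\widetilde{M'})=M'$. Hence $R$ is surjective. I would point out that one cannot bypass this with a closedness argument: $R$ is a non-finite morphism of affine varieties, hence not proper, so a priori its image is only constructible and dense — it is precisely the absence of non-crystal points in the Lusztig variety that makes the image all of the target.

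For (iv) I would finish with a counting argument. By (iii), $R^C_{\Pi(1)}(V)=\bigcup_{C\in\comp_{(n)}}\overline{R(C)}$, a \emph{finite} union of irreducible closed subsets (finitely many since $R^C_{\Pi(n)}(\widetilde V)$ is a variety). Any component $C'$ of $R^C_{\Pi(1)}(V)$ is therefore contained in some $\overline{R(C)}$, which in turn lies in some component $C''$; maximality forces $C'=\overline{R(C)}=C''$. Thus $C\mapsto\overline{R(C)}$ is a well-defined surjection $\comp_{(n)}\to\comp_{(1)}$. Since both sets are finite of the same cardinality — they index the same subset of $B(-\infty)$, as recalled above from \cite{GLS4} — this surjection is a bijection. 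The main obstacle, where all the real content sits, is (iii): one needs both that the tilde construction preserves the crystal property (the previous lemma) and that $R^C_{\Pi(1)}(V)$ consists entirely of crystal modules, so that $R$ is honestly surjective rather than merely dominant; everything else is bookkeeping.
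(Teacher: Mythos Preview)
Your proof is correct and follows the same line as the paper's: (i) is the previous lemma, (ii) is read off from the construction, (iii) uses the tilde construction as an explicit section (relying on the fact that for $\Pi(1)$ every point of $R^C_{\Pi(1)}(V)$ is already a crystal module), and (iv) is the counting argument combining surjectivity with the equality $|\comp_{(n)}|=|\comp_{(1)}|$ coming from the common parametrisation by $B(-\infty)$. Your write-up is simply more detailed than the paper's terse version; the added remark that $R$ is not proper, so surjectivity genuinely requires the section rather than a density argument, is a nice clarification but not a different method.
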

    \begin{proof}
    \begin{enumerate}[label=(\roman*)]
            \item This is the previous lemma.
            \item This follows from the construction of $\widetilde{M}$.
            \item We have shown that for any $M\in R_{\Pi(1)}(V)$ the module $\widetilde{M}$ is a preimage under $R$.
            \item This follows from $R$ being a surjective morphism between two varieties with the same number of irreducible components.
        \end{enumerate}
    \end{proof}
    Now we can prove independence from the symmetrizer.
    \begin{Theorem}
        Let $C$ be symmetric. Let $C_1, C_2\in \comp_{(n)}$ be components. Then
        \[\overline{R(C_1 *_{(n)} C_2)} = \overline{R(C_1)}*_{(1)} \overline{R(C_2)}.\]
    \end{Theorem}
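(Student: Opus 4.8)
The plan is to reduce the asserted identity to a one-sided inclusion of irreducible components, and then to obtain that inclusion by restricting the generic-extension construction to pairs of modules that are generic \emph{simultaneously} for $\Pi(n)$ and for $\Pi(1)$.

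First I would record that both sides are irreducible components of the same variety. Write $V=V_1\oplus V_2$ with $C_i\in\comp_{(n)}(V_i)$ and identify $V/\varepsilon V=V_1/\varepsilon V_1\oplus V_2/\varepsilon V_2$. By Theorem \ref{binary:wellDefined} the component $C_1*_{(n)}C_2$ lies in $\comp_{(n)}(V)$, so $\overline{R(C_1*_{(n)}C_2)}$ lies in $\comp_{(1)}(V/\varepsilon V)$ by the bijection $C\mapsto\overline{R(C)}$ of the previous lemma. On the other hand $\overline{R(C_i)}\in\comp_{(1)}(V_i/\varepsilon V_i)$, so $\overline{R(C_1)}*_{(1)}\overline{R(C_2)}$ also lies in $\comp_{(1)}(V/\varepsilon V)$ by Theorem \ref{binary:wellDefined} applied to $\Pi(1)$. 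Since $R^C_{\Pi(1)}(V/\varepsilon V)$ is equidimensional by Proposition \ref{Dyn:CrystalComps}, it then suffices to prove the single inclusion $\overline{R(C_1*_{(n)}C_2)}\subseteq\overline{R(C_1)}*_{(1)}\overline{R(C_2)}$.

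For the inclusion I would introduce the subset $U'\subseteq C_1\times C_2$ consisting of the pairs $(M_1,M_2)$ that both lie in $S(C_1,C_2)$ and satisfy $(\overline{M_1},\overline{M_2})\in S(\overline{R(C_1)},\overline{R(C_2)})$. Since reduction induces a morphism $C_1\times C_2\to\overline{R(C_1)}\times\overline{R(C_2)}$ and $R(C_i)$ is dense in $\overline{R(C_i)}$ by definition, the preimage of $S(\overline{R(C_1)},\overline{R(C_2)})$ is nonempty open; intersecting it with the nonempty open set $S(C_1,C_2)$ inside the irreducible variety $C_1\times C_2$ shows $U'$ is dense open. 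Now I would apply the second assertion of Theorem \ref{binary:wellDefined} over $\Pi(n)$ with $S=U'$ to get $\overline{\mathcal{E}(U')}=C_1*_{(n)}C_2$. For any $M\in\mathcal{E}(U')$, choose a short exact sequence $0\to M_2\to M\to M_1\to 0$ with $(M_1,M_2)\in U'$; by Lemma \ref{binary:redexact}(ii) its reduction $0\to\overline{M_2}\to\overline{M}\to\overline{M_1}\to 0$ is exact, and $(\overline{M_1},\overline{M_2})\in S(\overline{R(C_1)},\overline{R(C_2)})$ by the definition of $U'$, so $\overline{M}=R(M)\in\mathcal{E}(S(\overline{R(C_1)},\overline{R(C_2)}))\subseteq\overline{R(C_1)}*_{(1)}\overline{R(C_2)}$. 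Thus $R(\mathcal{E}(U'))$ is contained in the closed set $\overline{R(C_1)}*_{(1)}\overline{R(C_2)}$; passing to closures and using continuity of $R$ together with $\overline{\mathcal{E}(U')}=C_1*_{(n)}C_2$ gives $\overline{R(C_1*_{(n)}C_2)}\subseteq\overline{R(C_1)}*_{(1)}\overline{R(C_2)}$, and equidimensionality upgrades this to equality.

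The step I expect to carry the weight is the construction of $U'$. A pair which is generic for the $\Pi(n)$-structure need not reduce to a pair which is generic for the $\Pi(1)$-structure — for symmetric $C$ the symmetrized Euler forms of $\Pi(n)$ and $\Pi(1)$ differ by the factor $n$, so the Ext-dimensions at the two levels are not directly comparable — and therefore one cannot simply reduce the generic extensions coming from $S(C_1,C_2)$. What makes the argument go through is that $\Pi(1)$-genericity is itself an open dense condition, pulled back along the reduction morphism, so it can be imposed alongside $\Pi(n)$-genericity, while the second part of Theorem \ref{binary:wellDefined} guarantees that restricting the generic-extension construction to the smaller open set $U'$ still sweeps out all of $C_1*_{(n)}C_2$. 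Everything else — exactness of reduction, the identification of both sides as components, and the passage from containment to equality — is bookkeeping built on results already established.
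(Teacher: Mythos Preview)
Your proposal is correct and follows essentially the same route as the paper: the set $U'$ you introduce is exactly the set $S$ in the paper's proof, and the argument---apply Theorem \ref{binary:wellDefined} to this smaller open set, reduce the resulting extensions via Lemma \ref{binary:redexact}(ii), and upgrade the inclusion to equality using that both sides are irreducible components---is the same. You are slightly more explicit than the paper in justifying non-emptiness of $U'$ and in invoking equidimensionality, but the underlying strategy is identical.
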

    \begin{proof}
        We consider the set
        \[ S := \{(M_1,M_2) \in S(C_1,C_2) \mid (\overline{M_1},\overline{M_2}) \in S(\overline{R(C_1)},\overline{R(C_2)})\}\]
        of generic pairs whose reductions are generic. It is an open subset of $C_1\times C_2$, because it can be written as
        \[ S = S(C_1,C_2) \cap (R\times R)^{-1}(S(\overline{R(C_1)},\overline{R(C_2)})).\] By Theorem \ref{binary:wellDefined} we have
        \[C_1 *_{(n)} C_2 = \overline{\mathcal{E}(S)}. \]
        We consider an extension in $\mathcal{E}(S)$
        \[0\rightarrow M_2\rightarrow M \rightarrow M_1 \rightarrow 0.\]
        By Lemma \ref{binary:redexact} and the definition of $S$ this induces an extension
        \[0\rightarrow \overline{M_2}\rightarrow \overline{M} \rightarrow \overline{M_1}\rightarrow 0\]
        in $\mathcal{E}(S(\overline{R(C_1)},\overline{R(C_2)}))$. This shows $R(\mathcal{E}(S))\subset \mathcal{E}(S(\overline{R(C_1)},\overline{R(C_2)}))$ and by taking closures
        \[\overline{R(C_1 *_{(n)} C_2)} =\overline{R(\overline{\mathcal{E}(S)})}\subset \overline{S(\overline{R(C_1)},\overline{R(C_2)})}=\overline{R(C_1)}*_{(1)} \overline{R(C_2)}\]
        Equality holds everywhere because $\overline{R(C_1 *_{(n)} C_2)}$ is an irreducible component.
    \end{proof}
    As a consequence one can show that the bijection $R$ is compatible with the parametrisations by $B(-\infty)$.

\bibliographystyle{alpha}
\bibliography{references}

\end{document}